\documentclass[11pt]{amsart}

\usepackage[utf8]{inputenc}
\usepackage[backend=biber,style=alphabetic,maxbibnames=50,maxalphanames=4,
  minalphanames=4]{biblatex}
\usepackage{amssymb,amsfonts}
\usepackage{amsmath,amscd}
\usepackage[all,arc]{xy}
\usepackage{enumerate}
\usepackage{mathrsfs}
\usepackage[pdftex]{graphicx}
\usepackage[T1]{fontenc}
\usepackage[usenames,dvipsnames]{color}
\usepackage[bookmarks=false]{hyperref}
\usepackage{dsfont}
\usepackage{tikz-cd}
\usetikzlibrary{automata,positioning}
\usepackage{fullpage}
\usepackage{wasysym}

\theoremstyle{plain}
\newtheorem{thm}{Theorem}[section]
\newtheorem{cor}[thm]{Corollary}
\newtheorem{prop}[thm]{Proposition}
\newtheorem{lem}[thm]{Lemma}

\theoremstyle{definition}
\newtheorem{defn}[thm]{Definition}

\newtheorem{aDD^+m}[thm]{ADD^+endum}

\theoremstyle{remark}
\newtheorem{rmk}[thm]{Remark}

\title{A thermodynamic path metric for complex H\'enon maps}

\author{Fabrizio Bianchi}
\address{Dipartimento di Matematica, Università di Pisa, Largo Bruno Pontecorvo 5, 56127 Pisa, Italy}
 \email{fabrizio.bianchi$@$unipi.it}
\author{Yan Mary He}
\address{Department of Mathematics\\
	University of Oklahoma\\
	Norman, OK 73019, USA}
\email{he$@$ou.edu}
\date{\today}

\begin{document}

\begin{abstract}
We construct a Hermitian covariance form on hyperbolic components
in parameter spaces
of complex H\'enon maps, associated to the full complex unstable derivative cocycle.
The form measures infinitesimal variations in the marked complex unstable multiplier spectrum. 
Using a recent multiplier rigidity theorem by Cantat--Dujardin, we prove that it induces a distance on every hyperbolic component.

Motivated by Sullivan's dictionary and by the thermodynamic interpretation of the Weil--Petersson metric, our result gives a first higher-dimensional holomorphic-dynamical counterpart of pressure-type metric structures.
On the other hand, the construction differs from the 
one-dimensional theory
in an essential way: it replaces the real geometric potential measuring unstable expansion by the full complex unstable derivative cocycle. 
This also 
suggests a complex derivative cocycle counterpart to pressure-type metric structures in Teichm\"uller theory and Anosov representation theory.
\end{abstract}

\maketitle
\section{Introduction}
The Weil--Petersson metric originates in Teichm\"uller theory and has been a central object in the differential geometry of moduli spaces of Riemann surfaces; see, for instance, the classical works of Ahlfors \cite{Ahlfors61} and Wolpert \cite{Wolpert86}.
Through Sullivan's dictionary \cite{Sullivan83,McMullen95}, holomorphic dynamics, Kleinian groups, and Teichm\"uller theory are connected by a rich network of analogies, with stable components in dynamical parameter spaces playing a role analogous to Teichm\"uller spaces of hyperbolic surfaces. 
Motivated by this analogy and by the thermodynamic interpretation of the Weil--Petersson metric on Teichm\"uller spaces, McMullen \cite{McMullen08} introduced the Weil--Petersson metric for Blaschke products.
The geometry of these metrics for Blaschke products was further studied by Ivrii \cite{Ivrii14}, and more recently by the second author, Lee, and Park \cite{HLP23,HLP25}. For general hyperbolic components in moduli spaces of rational maps, the second author and Nie constructed a Weil--Petersson type Riemannian metric \cite{HeNie23}. This construction has also been extended in our earlier papers \cite{BH24,BH25} to parabolic and Misiurewicz polynomial families. Together, these works are part of a broader program of developing differential-geometric structures on families of holomorphic dynamical systems via thermodynamic formalism.

In this paper, we take a first step toward extending this differential geometric aspect of Sullivan's dictionary to {\it higher-dimensional} holomorphic dynamics, by studying hyperbolic components of complex H\'enon maps.
Beyond holomorphic dynamics, the construction points toward analogous Hermitian covariance forms for complex Anosov representations, in parallel with the pressure metrics of Bridgeman--Canary--Labourie--Sambarino \cite{BCLS15}.
Let $\Omega$ be a hyperbolic component in  
the space $\mathcal H^1_d$
of Hénon maps of algebraic degree $d$, or more generally in
the space $\mathcal H^k_{\mathbf d,\mathbf a}$
of compositions of $k$ Hénon maps
with 
given multidegree and 
multi-Jacobian; see Section 2.1 for the precise definitions. We construct a
\emph{complex multiplier covariance} form
$\langle\cdot,\cdot\rangle_{\rm cmc}$
 on $\Omega$. It is a 
positive semi-definite Hermitian form designed to capture the variation of the complex unstable multiplier data under deformation. 

The construction of the 
complex multiplier covariance form 
differs
in an essential way from the 
related pressure and
Weil--Petersson type
metrics in the one-dimensional theory, which 
are constructed by using a real geometric potential, such as the logarithm of the expansion. For H\'enon maps, however, we replace the real unstable expansion potential by the full complex unstable derivative cocycle, and the standard asymptotic covariance from thermodynamic formalism by an augmented asymptotic covariance. The resulting form is therefore not a pressure form in the usual sense, but rather a Hermitian covariance form on the Liv\v{s}ic class of the complex unstable cocycle.

Thanks to the form 
$\langle\cdot,\cdot\rangle_{\rm cmc}$ and its
positive semi-definiteness,
 we can define
 an associated length
 for piecewise $C^1$-paths
 $\gamma \colon [0,1]\to\Omega$ as
$$
L_{\rm cmc}(\gamma) := \int_0^1
\sqrt{Q(\gamma(t),\dot\gamma(t))}\,dt,
$$
where $Q(\gamma(t),\dot{\gamma}(t)) := \langle \dot{\gamma}(t)^{(1,0)}, \dot{\gamma}(t)^{(1,0)}\rangle_{\rm cmc,\gamma(t)}$ is the real quadratic form associated to the Hermitian form.
This defines a 
path pseudo-distance
$$
d_{\rm cmc}(\lambda_1,\lambda_2):=
\inf_\gamma L_{\rm cmc}(\gamma)
$$
on $\Omega$,
where the infimum is taken over all piecewise $C^1$-paths
$\gamma$ in $\Omega$ joining $\lambda_1$ to $\lambda_2$.
Although the Hermitian form is only positive semi-definite,
and the above infimum
could
a priori vanish also for non-constant paths,
our main theorem shows that the complex unstable multiplier data is rich enough to turn 
$d_{\rm cmc}$
into a non-degenerate global path distance on $\Omega$.

\begin{thm}\label{thm:path-metric}
The path pseudo-distance $d_{\rm cmc}$
is a distance on $\Omega$.
\end{thm}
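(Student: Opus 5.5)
We must show that $d_{\rm cmc}(\lambda_1,\lambda_2)>0$ whenever $\lambda_1\neq\lambda_2$. Symmetry and the triangle inequality are automatic for a path pseudo-distance. The plan is to compare $d_{\rm cmc}$ from below with a quantity built from finitely many periodic multipliers. The key input is the multiplier rigidity theorem of Cantat--Dujardin: two Hénon maps in the same hyperbolic component (hence conjugate on their Julia sets via the holomorphic motion) that have the same marked complex unstable multipliers at all periodic points are affinely conjugate. In our parameter space, normalized by multidegree and multi-Jacobian, this forces them to be equal, or at least to lie in a finite orbit of a finite symmetry group; I would first check that this ambiguity is harmless, or quotient by it.

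\textbf{Step 1: a pointwise lower bound.} For a periodic orbit $p(\lambda)$ of period $n$, followed holomorphically over $\Omega$, let $\rho_p(\lambda)$ be its complex unstable multiplier. The Livšic-type characterization of the augmented asymptotic covariance should give, for any tangent vector $v$,
\[
Q(\lambda,v)\ \ge\ c_p(\lambda)\,\bigl|d\log\rho_p(\lambda)\,v\bigr|^2 ,
\]
with $c_p$ depending continuously on $\lambda$. The idea is that the variation of the cocycle paired against the periodic orbit measure $\delta_p$ recovers $\frac{1}{n}\,d\log\rho_p$. Concretely, I would write $\langle v,v\rangle_{\rm cmc}$ as the variance of the derivative cocycle $\dot\phi$ with respect to the equilibrium state $\mu$. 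If $\dot\phi$ has zero variance, then it is cohomologous to a constant, and its periodic averages $\frac1n\dot{(\log\rho_p)}$ are all equal to that constant. This shows the kernel consists of directions along which all normalized periodic $\log$-multipliers vary equally. To exclude such a common drift, I would argue that the augmentation term records the mean $\int\dot\phi\,d\mu$, so that the whole vector $(d\log\rho_p\,v)_p$ is controlled. A quantitative version on compact subsets of $\Omega$ follows from continuity and compactness of the unit tangent sphere bundle.

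\textbf{Step 2: from infinitesimal to global.} Fix $\lambda_1\neq\lambda_2$. By Cantat--Dujardin there are finitely many periodic orbits $p_1,\dots,p_N$, marked via the holomorphic motion, such that the map $\Lambda=(\log\rho_{p_1},\dots,\log\rho_{p_N})$, with local branches, separates $\lambda_1$ from $\lambda_2$. These orbits are chosen in a small neighbourhood $U$ of $\lambda_1$ whose closure avoids $\lambda_2$. Noetherianity is needed here: equality of all multipliers cuts out an analytic set, and finitely many multipliers already cut out a set that is locally $\{\lambda_1\}$, after shrinking $U$. Then every path $\gamma$ from $\lambda_1$ to $\lambda_2$ must exit $U$, and on $U$ we have $\sqrt{Q}\ge c\,\|d\Lambda\,\dot\gamma\|$. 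Hence
\[
L_{\rm cmc}(\gamma)\ \ge\ c\,\inf_{\lambda\in\partial U}\|\Lambda(\lambda)-\Lambda(\lambda_1)\|\ >\ 0 .
\]
The positivity of this infimum uses that $\Lambda(\lambda)\neq\Lambda(\lambda_1)$ on the compact set $\partial U$, which is exactly the local isolation just obtained.

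\textbf{Main obstacle.} The hard step is Step 1, specifically showing that a degenerate direction of the Hermitian form forces all periodic multipliers to be infinitesimally constant, including the ``constant drift'' issue. In the one-dimensional pressure metric the analogous degeneracy is removed by the normalization pressure $=0$. Here it must come from the augmentation, or from the fixed Jacobian: the product of the stable and unstable multipliers equals the fixed Jacobian power, which may pin down the mean. I would first try to exploit that $\int\log|\text{unstable multiplier}|\,d\mu$ relates to entropy $\log d$ for the measure of maximal entropy, and use holomorphicity of $\rho_p$ to pass from the real to the complex part.
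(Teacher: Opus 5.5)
Your Step~2 is sound as far as it goes. Cantat--Dujardin together with local Noetherianity does give finitely many marked multipliers $\Lambda=(\log\rho_{p_1},\dots,\log\rho_{p_N})$ with $\Lambda(\lambda)\neq\Lambda(\lambda_1)$ for $\lambda\in\overline U\setminus\{\lambda_1\}$. But your conclusion then rests entirely on the \emph{uniform} inequality $\sqrt{Q(\lambda,\xi)}\ge c\,\|d\Lambda(\lambda)\xi\|$ for all $\lambda\in U$, and this is where the argument breaks.

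The Liv\v{s}ic characterization (Lemma~\ref{lem:Livsic}) only gives a pointwise statement. At each $\lambda$, the map $\xi\mapsto d\log\rho_p(\lambda)\xi$ is a linear functional on a finite-dimensional space that vanishes on the kernel of the positive semi-definite form $Q(\lambda,\cdot)$. Hence it is bounded by $C(\lambda)\sqrt{Q(\lambda,\cdot)}$ for some finite $C(\lambda)$. Nothing makes the optimal $C(\lambda)$ continuous, or even locally bounded, near points where the rank of $Q(\lambda,\cdot)$ drops. Your appeal to continuity and compactness of the unit sphere bundle does not apply there, because on the null cone of $Q$ the ratio $|d\log\rho_p(\lambda)\xi|^2/Q(\lambda,\xi)$ is of the form $0/0$.

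There is also no general estimate to fall back on. Take $g$ to be the indicator of a deep cylinder around a periodic point. Its periodic average stays bounded below, while $\operatorname{Cov}^\sharp_{\nu_0}(g,g)\to 0$ as the depth grows. So pairing the cocycle against the periodic orbit measure is not controlled by the augmented covariance. Any uniform bound would have to come from the analytic dependence on $\lambda$ of the finite-dimensional family $\{\partial_v\zeta_\lambda\}$, and you give no argument for that.

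This is the heart of the matter, not a technicality. At any $\lambda_1$ where $Q(\lambda_1,\cdot)$ is positive definite, one has $Q\ge c|\xi|^2$ nearby, and local separation follows without any multipliers. So the theorem has content exactly at the degenerate points, which is where your constant is uncontrolled.

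The paper avoids any quantitative comparison. It uses only the qualitative fact that a path with $Q(\gamma,\dot\gamma)\equiv 0$ has constant marked multipliers, and is therefore constant by Cantat--Dujardin (Proposition~\ref{prop:positive-length}). It then upgrades ``no non-constant path of zero length'' to separation of points. This uses the real-analyticity of $Q$ on $T_{\mathbb R}\Omega$ (Proposition~\ref{prop:analyticity-tangent-bundle}) and an induction over Mityagin's stratification of the degeneracy locus $\{\det B=0\}$ (Lemma~\ref{lem:semi-reduction}). Your proposal has no substitute for this step.

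Conversely, what you single out as the main obstacle is not one. If $\operatorname{Cov}_{\nu_0}(g,g)=0$, then $g=c+u-u\circ\sigma$ with $c=\int g\,d\nu_0$, and the augmentation term $|\int g\,d\nu_0|^2$ forces $c=0$. So a null direction makes every marked complex multiplier infinitesimally constant, with no input from the Jacobian or from entropy. Note also that $\nu_0$ is the equilibrium state of $-\delta_0\phi_0$, not the measure of maximal entropy. Working with the complex cocycle $\zeta_\lambda$ from the start also removes any need to pass from real to complex parts.
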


For hyperbolic families of rational maps,
a multiplier distribution result by Oh--Winter 
\cite{OW17}
allows one to precisely control the variation of the absolute values of the multipliers, and to prove that the corresponding metric is non-degenerate at a given point \cite{HeNie23}. 
In our setting, such a result is not available,
but we can instead
follow the more flexible
approach that we developed
in  \cite{BH24,BH25} to treat
parabolic and Misiurewicz polynomial families, combining it with 
a recent rigidity theorem by Cantat--Dujardin \cite{CD26Henon}. We give an overview of the main ideas in the next section.

\subsection{Strategy of the proof}\label{sec:strategy-pf}
We first briefly describe the construction of the form $\langle\cdot, \cdot\rangle_{\rm cmc}$. 
Fix $\lambda_0\in\Omega$. By structural stability, after passing to a symbolic model $\sigma\colon \Sigma_A\to\Sigma_A$ of the model dynamics of the hyperbolic component, the unstable derivative of $f_\lambda$ defines a complex-valued multiplicative cocycle $A_\lambda\colon \Sigma_A\to\mathbb C^\ast$. After choosing a local branch, write $\zeta_\lambda:=\log A_\lambda$ and
$\phi_\lambda:=\operatorname{Re}\zeta_\lambda=\log |A_\lambda|$.

Denote by $P_{\sigma}$ the topological pressure of $\sigma$ and define $\delta(\lambda)$ to be the unique positive real number satisfying $P_\sigma(-\delta(\lambda)\phi_\lambda) = 0$. Let $\nu_{\lambda_0}$ be the equilibrium state for the potential $-\delta(\lambda_0)\phi_{\lambda_0}$. 
We define the {\it complex multiplier covariance form} at $\lambda_0$
by
$$\langle v,w\rangle_{\mathrm{cmc},\lambda_0}
:= \operatorname{Cov}^{\sharp}_{\nu_{\lambda_0}} \Bigl(
\frac{\partial}{\partial v} \zeta_\lambda \Big|_{\lambda=\lambda_0},
\frac{\partial}{\partial w} \zeta_\lambda \Big|_{\lambda=\lambda_0}
\Bigr), \qquad v,w\in T_{\lambda_0}^{1,0}\Omega.$$
Here $\operatorname{Cov}_{\nu_{\lambda_0}}^{\sharp}$ denotes the augmented asymptotic covariance; see Section \ref{sec:aug-covariance}. 
The form $\langle \cdot, \cdot\rangle_{\rm cmc}$
also 
admits another interpretation as the Hessian form of a real-analytic function. Indeed,
consider the function 
$$G_{\lambda_0}(\lambda) := \operatorname{Cov}^{\sharp}_{\nu_{\lambda_0}} \bigl(\zeta_\lambda-\zeta_{\lambda_0}, \zeta_\lambda-\zeta_{\lambda_0}\bigr).$$
It is not difficult to see that 
$G_{\lambda_0}$
has a local minimum at $\lambda_0$,
which implies that its complex Hessian is well-defined.
One can prove that this complex Hessian recovers the complex multiplier
covariance form.

Geometrically speaking, the complex multiplier covariance form measures infinitesimal variations of the marked complex unstable multiplier spectrum. A 
crucial property is that a
tangent vector has zero norm if and only if the corresponding infinitesimal variation of the complex unstable cocycle is  cohomologous to $0$.
Thanks to this 
characterization, we can prove that
along any path of zero length, the marked unstable multiplier spectrum is constant.
The
recent
multiplier rigidity theorem by
Cantat--Dujardin \cite{CD26Henon} implies that every zero-length path in $\Omega$ is constant.
This plays, in the present 
setting, the role played by the one-dimensional multiplier rigidity
results of Ji--Xie \cite{JiXie23} in our earlier works
\cite{BH24,BH25}. Finally, using the real-analytic dependence of the
form on the tangent bundle, a reduction argument upgrades this
positivity for individual non-constant paths to separation of points
by the induced path pseudo-distance, proving Theorem
\ref{thm:path-metric}.

\subsection{Anosov representations}
We expect this point of view to be useful beyond holomorphic dynamics. Via higher rank Sullivan's dictionary, a natural parallel setting is the theory of Anosov representations, where thermodynamic formalism has  played a central role in the construction of pressure metrics. More specifically,
for Anosov representations into real semisimple Lie groups, Bridgeman--Canary--Labourie--Sambarino \cite{BCLS15} constructed pressure metrics from the thermodynamic formalism applied to real length functions, such as spectral radius or Jordan projection data. 

For holomorphic families of Anosov representations into complex Lie groups, one may instead consider the logarithm of a distinguished complex eigenvalue, or more generally a complex weight cocycle, as a complex-valued H\"older cocycle over the associated flow. The construction in the present paper suggests that one can form a Hermitian covariance form on the Liv\v{s}ic class of this complex cocycle, measuring infinitesimal variations of the marked complex spectral data. This is in the same spirit as the cocycle and cross ratio formalism of Hamenst\"adt \cite{Hamenstadt97,Hamenstadt99} and the pressure metric theory for Anosov representations.

\subsection{Organization of the paper} 
The paper is organized as follows. In Section 2, we recall basic results about H\'enon maps and their moduli spaces. In Section 3, we define the complex multiplier covariance form and characterize its vanishing directions. 
In Section 4,
we prove the analyticity of the form on the tangent bundle
and show
that non-constant piecewise $C^1$-paths have
positive length.
We complete the proof of 
 Theorem \ref{thm:path-metric}
 in Section \ref{s:reduction-proof} 
 by a
 reduction argument, made possible by the analyticity of the form.

\subsection*{Acknowledgments}
This project has received funding from the Programme Investissement d'Avenir (ANR TIGerS, ANR-24-CE40-3604)
and
from the MIUR Excellence Department Project awarded to the Department of Mathematics of the University of Pisa, CUP I57G22000700001.
The first
author is affiliated to the GNSAGA group of INdAM.

\section{Hyperbolic components, structural conjugacies, and unstable cocycles}
\label{s:prelim}

In this section, we recall the basic dynamical properties of H\'enon maps that we will need and fix the notation used
throughout the paper.

\subsection{Parameter spaces of complex H\'enon maps}
By the Friedland--Milnor classification
\cite{FriedlandMilnor89}, every polynomial automorphism of $\mathbb C^2$
of positive entropy
is conjugate to a finite composition of generalized
H\'enon maps.
Following the normal forms
used in \cite{CD26Henon}, fix a multidegree
$\mathbf d=(d_k,\ldots,d_1)\in(\mathbb N_{\ge 2})^k$.
A {\it normalized generalized H\'enon map of multidegree $\mathbf d$} is a
map
$f=h_k\circ\cdots\circ h_1$,
where
$$
h_i(x,y)=(a_i y+p_i(x),x),
\qquad a_i\in\mathbb C^\ast,
$$
and $p_i(x)$ is monic and centered of degree $d_i$. We write
$$
\mathbf a=(a_k,\ldots,a_1)\in(\mathbb C^\ast)^k
$$
for the {\it multi-Jacobian parameter}.

Let $\mathcal H^k_{\mathbf d}$
be the space of normalized H\'enon maps of multidegree $\mathbf d$.
As an algebraic variety,
$\mathcal H^k_{\mathbf d}
        \simeq
        (\mathbb C^\ast)^k
        \times
        \mathbb C^{d_1-1}
        \times\cdots\times
        \mathbb C^{d_k-1}$.
If the multi-Jacobian parameter $\mathbf a$ is fixed, we denote by
$\mathcal H^k_{\mathbf d,\mathbf a} \subset
\mathcal H^k_{\mathbf d}$
the corresponding fixed multi-Jacobian slice. When $k=1$, we write simply
$\mathcal H^1_d$
for the space of normalized H\'enon maps of degree $d$.

\begin{rmk}\label{rmk:finite-conjugacy}
The normal form removes the continuous affine conjugacy freedom. It is not
unique, but the remaining ambiguity is finite.
It
is generated by permutations
of the factors, when compatible with the multidegree, and by a diagonal action
of roots of unity; see \cite[Section 1.2]{CD26Henon}. Thus the {\it moduli space} is a finite quotient of the
normal-form space. Since this residual ambiguity is finite, it creates no
infinitesimal tangent directions. We shall therefore construct the forms on the
smooth normal-form spaces
$\mathcal H^1_d$ or
$\mathcal H^k_{\mathbf d,\mathbf a},
$
and note that the construction descends to the corresponding finite quotient
moduli space.
We observe that, 
when $k=1$, it is not necessary to fix the Jacobian, as no
ambiguity from its decomposition can arise in this case.
\end{rmk}

\subsection{Basic properties of H\'enon maps}
Given a (normalized generalized)
H\'enon map $f$ of positive entropy,
we will denote by
$J_f^\ast$
the closure of the saddle periodic points of
$f$. Equivalently, this is the support of the unique measure of maximal entropy of $f$ \cite{BLSa,BLS93b}.
The measure of maximal entropy is mixing 
\cite{BS3}, see also \cite{BLSa,Dinh05, BD24} 
for more precise results.
It follows immediately that $f|_{J^\ast_f}$ is topologically mixing.

\begin{defn}
A polynomial automorphism $f$ of $\mathbb C^2$ is {\it hyperbolic}
(on $J_f^\ast$)
if $J_f^\ast$ is a uniformly hyperbolic set for $f$, i.e.,
there is a continuous $Df$-invariant splitting
$$
T_{J_f^\ast}\mathbb C^2=E_f^s\oplus E_f^u
$$
and constants $C>0$ and $0<\rho<1$ such that, for all $n\ge 0$,
$$
        \|Df^n|_{E_f^s}\|\le C\rho^n,
        \qquad
        \|Df^{-n}|_{E_f^u}\|\le C\rho^n.
$$
\end{defn}

In this paper, we will be mostly concerned with hyperbolic Hénon maps in the above sense, and  restrict ourselves to the 
dynamics of  $f|_{J^{\ast}_f}$.

\medskip

We observe here that there exists
another natural Julia set
for Hénon maps. Namely, one sets
$
J_f:=J_f^+\cap J_f^-,
$
where
$$
J_f^\pm:=\partial K_f^\pm,
\qquad
K_f^\pm:=\{z\in\mathbb C^2:\{f^{\pm n}(z)\}_{n\ge 0}\text{ is bounded}\},
$$
see \cite{BS91I}.
For maps that are hyperbolic on $J^\ast_f$,
the sets
$J^\ast_f$ and $J_f$ coincide by \cite{Dujardin20}.
It is an open question whether 
$J_f=J_f^\ast$ in general,
see for instance \cite{HenonOpenProblems} for results and references in this direction.

\subsection{Weak stability and hyperbolic components}
We let $\Lambda$ be either  $\mathcal H^1_d$ or $\mathcal H^k_{\mathbf d,\mathbf a}$.
We denote the
corresponding holomorphic family by
$\lambda\mapsto f_\lambda$.
Following Dujardin--Lyubich \cite{DL15}, such a family is called {\it weakly stable} on an open subset of $\Lambda$
if periodic orbits do not bifurcate there, i.e.,
if no
periodic orbit changes type by having an eigenvalue cross the unit circle.
We refer to \cite{DL15} for several equivalence characterizations. 
This
notion is the two-dimensional analogue, in the H\'enon case,
of the $J$-stability for rational maps  on $\mathbb P^1 (\mathbb C)$ \cite{Lyu83typical, MSS83}; see \cite{BBD18}
for the case of higher dimensional endomorphisms
of $\mathbb P^k(\mathbb C)$.
In particular, $\Lambda$ is naturally decomposed into two complementary parts: a 
(weak) stability set, where 
no bifurcations arise, and a bifurcation set.

An important feature of the one-dimensional theory of stability and bifurcation \cite{Lyu83typical,MSS83} is that, as soon as a connected component $\Omega$ of the stability set
contains a hyperbolic rational map, all maps 
in the same component share the same property.
In the case of Hénon maps, this property has been established by
Berger--Dujardin \cite{BergerDujardin2017}.
It thus makes sense to talk about \emph{hyperbolic components}. These are the 
connected components of the 
{\it hyperbolic locus} of $\Lambda$
(the set of maps 
 $f_\lambda$ 
 which are 
 hyperbolic on $J_\lambda^\ast$). Equivalently, it is a connected component of the stability set containing one
hyperbolic parameter
(and hence consisting of hyperbolic parameters).

\medskip

Throughout the paper, we fix 
a hyperbolic component $\Omega\subset\Lambda$, with  $\Lambda=\mathcal H^1_d $ or $\Lambda=\mathcal H^k_{\mathbf d,\mathbf a}$.
Fix a base parameter $\lambda_0\in\Omega$, and write
$f_0:=f_{\lambda_0}$
and
$J_0^\ast:=J_{\lambda_0}^\ast$.
By structural stability of uniformly hyperbolic sets, after possibly shrinking
to a simply connected neighborhood
$U\Subset\Omega$
of $\lambda_0$, there exist conjugacies
$h_\lambda \colon J_0^\ast \to J_\lambda^\ast$ satisfying $h_{\lambda_0}=\operatorname{id}
$ and
$h_\lambda\circ f_0=f_\lambda\circ h_\lambda$.
Moreover, for each $x\in J_0^\ast$, the map
$\lambda\mapsto h_\lambda(x)$
is holomorphic, and the maps $h_\lambda$ are H\"older continuous 
with respect to $x$, uniformly for $\lambda\in U$.

We fix a Markov partition for $f_0|_{J_0^\ast}$. Passing to the associated
symbolic model, we obtain a subshift of finite type
$\sigma \colon \Sigma_A\to\Sigma_A$
and a coding map
$\pi_0 \colon \Sigma_A\to J_0^\ast$
satisfying
$\pi_0\circ\sigma=f_0\circ\pi_0$.
For $\lambda\in U$, set
$\pi_\lambda:=h_\lambda\circ\pi_0.$
Then we have
$\pi_\lambda\circ\sigma=f_\lambda\circ\pi_\lambda$.

\subsection{The unstable bundle and the complex derivative cocycle}
Let $\Omega$ and $U$ be as above.
For each $\lambda\in U$, the hyperbolic splitting gives a fibration
$E_\lambda^u\to J_\lambda^\ast.
$
Pulling back by the coding map gives a complex line bundle
$\pi_\lambda^\ast E_\lambda^u\to \Sigma_A.
$
Since $\Sigma_A$ is totally disconnected, this line bundle is topologically
trivial. 
One reason for working
on the symbolic model is to avoid
potential monodromy
problems that could arise from trying to trivialize the unstable
bundle directly on $J^\ast_\lambda$.

After possibly shrinking $U$, we choose a non-vanishing H\"older section
$e_\lambda(x)\in E_\lambda^u(\pi_\lambda x), x\in\Sigma_A,
$
depending holomorphically on $\lambda$ and H\"older continuously on $x$.

\begin{defn}
The {\it complex unstable derivative cocycle} is the function
$A_\lambda \colon \Sigma_A\to\mathbb C^\ast
$
defined by
$$Df_\lambda(\pi_\lambda x)e_\lambda(x)
=A_\lambda(x)e_\lambda(\sigma x).
$$
\end{defn}

The function $A_\lambda$ is a multiplicative cocycle over $\sigma$. We write
$$
A_\lambda^{(n)}(x) :=
\prod_{j=0}^{n-1} A_\lambda(\sigma^j x).
$$
If $x\in\operatorname{Per}_n(\sigma)$, then
$A_\lambda^{(n)}(x) = \mu_\lambda^u(\pi_\lambda x),
$
where $\mu_\lambda^u(\pi_\lambda x)$ denotes the unstable eigenvalue of
$Df_\lambda^n$ at the saddle periodic point $\pi_\lambda x$.

After possibly 
shrinking $U$, we choose a branch
$\zeta_\lambda:=\log A_\lambda\in C^\alpha(\Sigma_A,\mathbb C)$.
We also set
$\phi_\lambda:=\operatorname{Re}\zeta_\lambda=\log |A_\lambda|$. The following result follows from standard 
graph transform and structural stability arguments.
We give a proof for completeness.

\begin{prop}\label{prop:analytic-cocycle}
After possibly further shrinking $U$, the map $\lambda\mapsto \zeta_\lambda$
is holomorphic from $U$ to $C^\alpha(\Sigma_A,\mathbb C)$. In particular,
$\lambda\mapsto \phi_\lambda
$
is real-analytic from $U$ to $C^\alpha(\Sigma_A,\mathbb R)$.
\end{prop}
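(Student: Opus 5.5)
We will show that $\lambda\mapsto A_\lambda$ is holomorphic as a map $U\to C^\alpha(\Sigma_A,\mathbb C^\ast)$, and then take a holomorphic logarithm. The key input is the holomorphic dependence of the unstable direction on $\lambda$, uniformly in the symbolic point and in a H\"older way.

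\emph{Step 1: a holomorphic trivializing section.} For $x\in\Sigma_A$ and $\lambda\in U$, the point $\pi_\lambda(x)=h_\lambda(\pi_0x)$ depends holomorphically on $\lambda$. It is H\"older in $x$, uniformly in $\lambda$. We realize $E^u_\lambda(\pi_\lambda x)$ as the fixed point of a graph transform. Fix a continuous cone field around $E^u_0$ on a neighbourhood of $J_0^\ast$. For $\lambda$ close to $\lambda_0$, write unstable lines as graphs $\{v+L v\}$ of linear maps $L\colon E^u_0(\pi_0x)\to E^s_0(\pi_0x)$ with $|L|$ small. Then $E^u_\lambda$ is the unique fixed point of the operator
\[
(\Gamma_\lambda L)(\sigma x)=\text{graph of } Df_\lambda(\pi_\lambda x)\bigl(\text{graph }L(x)\bigr).
\]
This operator acts on the Banach space of bounded sections $L$ over $\Sigma_A$. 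It is a uniform contraction, since $f_0$ is hyperbolic and $\lambda$ is close to $\lambda_0$. It also depends holomorphically on $\lambda$, because $Df_\lambda(\pi_\lambda x)$ is polynomial in $\lambda$ and in $\pi_\lambda x$. By the holomorphic implicit function (or parametrized contraction) theorem, the fixed point $L_\lambda$ is holomorphic in $\lambda$ with values in bounded sections.

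We then set $e_\lambda(x):=e_0(x)+L_\lambda(x)e_0(x)$, where $e_0$ is a fixed H\"older section of $E^u_0$. This section is nonvanishing and holomorphic in $\lambda$.

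\emph{Step 2: H\"older regularity.} This is the main obstacle. We must show that $L_\lambda$ is H\"older in $x$, and that $\lambda\mapsto L_\lambda$ is holomorphic into $C^\alpha$ rather than only into $C^0$. We first try the standard argument: $\Gamma_\lambda$ also preserves a closed ball of $C^\alpha$ sections. This holds once $\alpha$ is small enough that the contraction rate times the Lipschitz constant of $\sigma^{-1}$, raised to the power $\alpha$, is less than $1$. The ball is closed in $C^0$, so the $C^0$ fixed point lies in it, with uniformly bounded $C^\alpha$ norm for $\lambda\in U$.

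To upgrade to holomorphy into $C^\alpha$, we apply Cauchy estimates. The maps $\lambda\mapsto L_\lambda(x)-L_\lambda(y)$ are holomorphic and bounded by $C\,d(x,y)^\alpha$ uniformly in $\lambda$. Hence the difference quotients in $\lambda$, and all Taylor coefficients, satisfy the same H\"older bound on a smaller polydisc. The Taylor series therefore converges in $C^{\alpha'}$ for any $\alpha'<\alpha$. Renaming $\alpha$ accordingly, this gives holomorphy into $C^\alpha$ after shrinking $U$. Alternatively, we use the fact that a locally bounded map into $C^\alpha$ which is weakly holomorphic against point evaluations is holomorphic into $C^{\alpha'}$.

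\emph{Step 3: the cocycle and its logarithm.} Since $E^u_\lambda(\pi_\lambda\sigma x)$ is one-dimensional, we have
\[
A_\lambda(x)=\frac{\ell_{\sigma x}\bigl(Df_\lambda(\pi_\lambda x)e_\lambda(x)\bigr)}{\ell_{\sigma x}(e_\lambda(\sigma x))},
\]
where $\ell_y$ is the coordinate along $E^u_0(\pi_0 y)$ in the splitting $E^u_0\oplus E^s_0$. The denominator is uniformly bounded away from $0$ near $\lambda_0$.

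The numerator and denominator are composed from the holomorphic $C^\alpha$-valued maps of Steps 1 and 2, and from polynomial expressions in $\pi_\lambda x$. Hence $\lambda\mapsto A_\lambda$ is holomorphic into the Banach algebra $C^\alpha(\Sigma_A,\mathbb C)$. Moreover $|A_\lambda|$ is bounded below uniformly.

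After shrinking $U$ so that $A_\lambda(x)/A_{\lambda_0}(x)$ stays in a small disc around $1$, we define
\[
\zeta_\lambda:=\zeta_{\lambda_0}+\operatorname{Log}\bigl(A_\lambda/A_{\lambda_0}\bigr),
\]
where $\operatorname{Log}$ is the principal branch, given by a convergent power series in the Banach algebra. This $\zeta_\lambda$ is holomorphic into $C^\alpha$. Finally, $\phi_\lambda=\operatorname{Re}\zeta_\lambda$ is the composition of a holomorphic map with the real-linear bounded map $\operatorname{Re}$, and is therefore real-analytic.
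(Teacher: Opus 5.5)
Your proposal is correct and follows the same route as the paper. You build a non-vanishing unstable frame $e_\lambda$ that depends holomorphically on $\lambda$ as a $C^\alpha$-section, deduce that $\lambda\mapsto A_\lambda$ is holomorphic into the Banach algebra $C^\alpha(\Sigma_A,\mathbb C)$, and set $\zeta_\lambda=\zeta_{\lambda_0}+\operatorname{Log}(A_\lambda/A_{\lambda_0})$ via the power series; your graph-transform and Cauchy-estimate arguments spell out what the paper cites as standard.

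Two small refinements:

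1. Your Cauchy estimates already bound the $C^\alpha$-norm of the $k$-th Taylor coefficient by $M r^{-|k|}$, so the series converges in $C^\alpha$ itself on a smaller polydisc. No loss $\alpha\to\alpha'$ is needed.

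2. For the logarithm, a pointwise condition alone needs one more line of justification, namely that the spectral radius in $C^\alpha(\Sigma_A,\mathbb C)$ equals the sup norm. It is cleaner to shrink $U$ so that $\|A_\lambda/A_{\lambda_0}-1\|_{C^\alpha}<1/2$, which continuity into $C^\alpha$ allows, as the paper does.
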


\begin{proof}
By structural stability of the hyperbolic set, the conjugacies $ \pi_\lambda=h_\lambda\circ\pi_0\colon \Sigma_A\to J_\lambda^\ast$ depend holomorphically on $\lambda$ pointwise and H\"older continuously on $x$, uniformly for $\lambda$ in a small neighborhood $U$. Moreover, for a holomorphic family of uniformly hyperbolic maps, the stable and unstable directions depend holomorphically on the parameter and H\"older continuously on the base point, with a fixed H\"older exponent. Thus the pulled-back unstable line $ E^u_\lambda(\pi_\lambda x)\subset T_{\pi_\lambda x}\mathbb C^2$ 
depends holomorphically on $\lambda$ and H\"older continuously on $x$. After possibly
shrinking $U$, we choose a local non-vanishing H\"older frame $$ e_\lambda(x)\in E^u_\lambda(\pi_\lambda x), \qquad x\in\Sigma_A, $$ depending holomorphically on $\lambda$ as a $C^\alpha$-section. With respect to this frame, the complex unstable derivative cocycle $A_\lambda$ is defined by $$ Df_\lambda(\pi_\lambda x)e_\lambda(x) = A_\lambda(x)e_\lambda(\sigma x). $$ 
It follows from the above regularity
of $f_\lambda$, $\pi_\lambda$, and $e_\lambda$ 
that $\lambda\mapsto A_\lambda$ is holomorphic from $U$ to $C^\alpha(\Sigma_A,\mathbb C^\ast)$.

It remains to choose the logarithm. Fix a H\"older logarithm
$$
\zeta_{\lambda_0}\in C^\alpha(\Sigma_A,\mathbb C),
        \qquad
e^{\zeta_{\lambda_0}}=A_{\lambda_0}.
$$
Such a logarithm exists for a non-vanishing H\"older function on a subshift of
finite type, after choosing branches on sufficiently small cylinders. Since
$A_\lambda/A_{\lambda_0}\to 1$ in
$C^\alpha(\Sigma_A,\mathbb C)$
as $\lambda\to\lambda_0$, we may shrink $U$ so that
$$
\left\|A_\lambda/A_{\lambda_0}-1\right\|_{C^\alpha}<\frac12
\qquad\text{for all }\lambda\in U.
$$
Then the principal logarithm 
${\rm Log} (\cdot)$
is defined by the convergent power series in the
Banach algebra $C^\alpha(\Sigma_A,\mathbb C)$, and we set
$$
\zeta_\lambda :=
\zeta_{\lambda_0} +
{\rm Log}\left(A_\lambda/A_{\lambda_0}\right).
$$
This gives a holomorphic map
$\lambda\mapsto \zeta_\lambda
\in C^\alpha(\Sigma_A,\mathbb C)$
satisfying $e^{\zeta_\lambda}=A_\lambda$. Therefore
$\phi_\lambda=\operatorname{Re}\zeta_\lambda$
depends real-analytically on $\lambda$ as a
$C^\alpha(\Sigma_A,\mathbb R)$-valued function.
\end{proof}

\begin{rmk}\label{rmk:frame-cobdry}
If we change the H\"older frame by
$$
e_\lambda'(x)=B_\lambda(x)e_\lambda(x),
\qquad B_\lambda(x)\in\mathbb C^\ast,
$$
then the cocycle changes to
$$
A_\lambda'(x) =
A_\lambda(x)\frac{B_\lambda(x)}{B_\lambda(\sigma x)}.
$$
After choosing $b_\lambda=\log B_\lambda$, this gives
$$
\zeta_\lambda'
=
\zeta_\lambda+b_\lambda-b_\lambda\circ\sigma.
$$
Thus changing the frame changes the logarithmic cocycle only by a H\"older
coboundary. In particular, the periodic sums
$S_n\zeta_\lambda(x)=
\zeta_\lambda(x) + \dots + \zeta_\lambda (\sigma^{n-1}(x))$
are independent of the frame and recover
$\log \mu_\lambda^u(\pi_\lambda x)
$
up to an additive constant in $2\pi i\mathbb Z$, whose derivative in parameter
is zero.
\end{rmk}

\subsection{Pressure}
Let $U$ be as above.
For each $\lambda\in U$, define $\delta(\lambda)>0$ by the equation
$P_\sigma(-\delta(\lambda)\phi_\lambda)=0,
$
where $P_\sigma$ denotes topological pressure for the subshift $\sigma$. The next lemma 
in particular shows that $\delta(\lambda)$ is well-defined.

\begin{lem}
For every $\lambda\in U$, there is a unique number
$\delta(\lambda)>0$
satisfying
$P_\sigma(-\delta(\lambda)\phi_\lambda)=0.$
Moreover, the map
$\lambda\mapsto \delta(\lambda)$
is real-analytic on $U$.
\end{lem}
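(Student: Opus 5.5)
The plan is to study the function $p_\lambda(t):=P_\sigma(-t\phi_\lambda)$ for $t\in\mathbb R$. We get existence and uniqueness of its zero from monotonicity and the values at the endpoints, and real-analyticity from the implicit function theorem applied to $(\lambda,t)\mapsto P_\sigma(-t\phi_\lambda)$.

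The first step is to show that $\phi_\lambda$ is cohomologous to a strictly positive function, or at least that $S_n\phi_\lambda\ge n\log\kappa - C$ with $\kappa>1$. By uniform hyperbolicity, $\|Df^{n}_\lambda|_{E^u}\|\ge C^{-1}\rho^{-n}$ on $J^\ast_\lambda$. The section $e_\lambda$ is continuous and nonvanishing on the compact $\Sigma_A$, so $|e_\lambda|$ is bounded above and below. Then $|A^{(n)}_\lambda(x)| = |Df^n_\lambda e_\lambda(x)|/|e_\lambda(\sigma^n x)|$ gives
\[
S_n\phi_\lambda(x)=\log|A^{(n)}_\lambda(x)|\ \ge\ n\log\rho^{-1}-C',
\]
uniformly in $x$.

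Next we use the standard properties of pressure for Hölder potentials on a topologically mixing subshift of finite type. Mixing holds because $f_0|_{J_0^\ast}$ is mixing, and we may choose the Markov partition so that $\sigma$ is mixing, or reduce to the mixing case. In this setting $t\mapsto p_\lambda(t)$ is real-analytic and convex, and its derivative is $p'_\lambda(t)=-\int\phi_\lambda\,d\mu_t$, where $\mu_t$ is the equilibrium state of $-t\phi_\lambda$. By the first step, every invariant measure satisfies $\int\phi_\lambda\,d\mu\ge\log\rho^{-1}>0$, obtained by averaging $S_n\phi_\lambda/n$. Hence $p'_\lambda(t)\le-\log\rho^{-1}<0$, so $p_\lambda$ is strictly decreasing. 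Moreover $p_\lambda(0)=h_{\rm top}(\sigma)>0$, which holds since $h_{\rm top}(f_\lambda)=\log(d_1\cdots d_k)>0$ and the coding is finite-to-one, so entropy is preserved. Finally, the variational principle gives $p_\lambda(t)\le h_{\rm top}(\sigma)-t\log\rho^{-1}\to-\infty$. Together these give a unique zero $\delta(\lambda)>0$.

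For analyticity, we use that $P_\sigma$ is real-analytic on $C^\alpha(\Sigma_A,\mathbb R)$, a classical result of Ruelle obtained via analytic perturbation of the isolated leading eigenvalue of the transfer operator. Proposition~\ref{prop:analytic-cocycle} says $\lambda\mapsto\phi_\lambda$ is real-analytic into $C^\alpha$, so $F(\lambda,t):=P_\sigma(-t\phi_\lambda)$ is real-analytic on $U\times\mathbb R$. Since $\partial_tF(\lambda,\delta(\lambda))<0$, the real-analytic implicit function theorem yields a real-analytic local solution. By uniqueness of the zero, this local solution coincides with $\delta$, so $\delta$ is real-analytic on $U$.

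I expect the main point requiring care to be the first step, namely the strict negativity of $\partial_t F$. It hinges on comparing $|A^{(n)}_\lambda|$ with $\|Df^n|_{E^u}\|$ through the bounded frame. The rest is standard thermodynamic formalism.
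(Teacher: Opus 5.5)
Your proof is correct and follows essentially the same route as the paper. Both arguments establish the positivity of $\int\phi_\lambda\,d\eta$ for every invariant $\eta$, deduce that $t\mapsto P_\sigma(-t\phi_\lambda)$ is strictly decreasing with $P_\sigma(0)=h_{\rm top}(\sigma)>0$, and conclude with the real-analytic implicit function theorem. Your comparison through the bounded frame is the same content as the paper's remark that $\phi_\lambda$ is cohomologous to the log unstable Jacobian, and your quantitative bound $\int\phi_\lambda\,d\eta\ge\log\rho^{-1}$ also justifies the limit $P_\sigma(-t\phi_\lambda)\to-\infty$, which the paper only asserts.
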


\begin{proof}
The potential $\phi_\lambda$ is cohomologous to the logarithm of the unstable
Jacobian with respect to any smooth Hermitian metric on $E_\lambda^u$.
Uniform hyperbolicity implies that every $\sigma$-invariant probability
measure $\eta$ satisfies
$\int \phi_\lambda\,d\eta>0.
$
It follows that the function
$t\mapsto P_\sigma(-t\phi_\lambda)$
is strictly decreasing. Since
$P_\sigma(0)=h_{\mathrm{top}}(\sigma)>0
$
and
$P_\sigma(-t\phi_\lambda)\to -\infty$ as $t\to+\infty$,
there is a unique zero $\delta(\lambda)>0$.

The map
$(t,\lambda)\mapsto P_\sigma(-t\phi_\lambda)$
is real-analytic,
since the
pressure is real-analytic for real-analytic families
of H\"older potentials over a mixing subshift of finite type 
(see, e.g., \cite{DTUZ26,PU,Parry90})
and $(t,\lambda)\mapsto -t\phi_\lambda$ is real-analytic. 
Moreover,
$$
\partial_t P_\sigma(-t\phi_\lambda)
=
-\int \phi_\lambda\,d\nu_{t,\lambda}<0
$$
at the zero, where $\nu_{t,\lambda}$ is the equilibrium state of
$-t\phi_\lambda$. The real-analytic implicit function theorem gives the
real-analyticity of $\delta(\lambda)$.
\end{proof}

Set
$\delta_0:=\delta(\lambda_0),
\phi_0:=\phi_{\lambda_0}$,
and let $\nu_0$ be the equilibrium state for the potential
$-\delta_0\phi_0$.
Thus we have
$P_\sigma(-\delta_0\phi_0)=0$.

\subsection{Augmented asymptotic covariance and the Livšic criterion}\label{sec:aug-covariance}
Take $g,h\in C^\alpha(\Sigma_A,\mathbb C)$, and write the Birkhoff sum
$S_ng:=\sum_{j=0}^{n-1}g\circ\sigma^j.
$
Define the {\it complex asymptotic covariance} by
$$
\operatorname{Cov}_{\nu_0}(g,h) :=
\lim_{n\to\infty} \frac1n
\int\left(S_ng-n\int g\,d\nu_0\right)
        \overline{
        \left(S_nh-n\int h\,d\nu_0\right)}
        \,d\nu_0
$$
and the
{\it augmented covariance} by
$$
\operatorname{Cov}^{\sharp}_{\nu_0}(g,h) := \left(\int g\,d\nu_0\right)
\overline{\left(\int h\,d\nu_0\right)}+
\operatorname{Cov}_{\nu_0}(g,h).
$$

\begin{lem}\label{lem:Livsic}
For $g\in C^\alpha(\Sigma_A,\mathbb C)$,
$\operatorname{Cov}^{\sharp}_{\nu_0}(g,g)=0$
if and only if there exists a H\"older function
$u\colon \Sigma_A\to\mathbb C$
such that
$g=u-u\circ\sigma$.
In particular, if
$\operatorname{Cov}^{\sharp}_{\nu_0}(g,g)=0$,
then $S_ng(x)=0$
for every $x\in\operatorname{Per}_n(\sigma)$.
\end{lem}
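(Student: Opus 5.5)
The plan is to reduce the statement to the classical real Livšic theorem by splitting $g$ into real and imaginary parts. Write $g=g_1+ig_2$ with $g_1,g_2\in C^\alpha(\Sigma_A,\mathbb R)$. Since the integrand in $\operatorname{Cov}_{\nu_0}(g,g)$ is $|S_ng-n\int g\,d\nu_0|^2$, we get
$$
\operatorname{Cov}_{\nu_0}(g,g)=\operatorname{Var}_{\nu_0}(g_1)+\operatorname{Var}_{\nu_0}(g_2),
\qquad
\Bigl|\int g\,d\nu_0\Bigr|^2=\Bigl(\int g_1\,d\nu_0\Bigr)^2+\Bigl(\int g_2\,d\nu_0\Bigr)^2 .
$$
Here $\operatorname{Var}_{\nu_0}$ is the usual real asymptotic variance. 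The limit defining it exists because $\nu_0$ is the equilibrium state of a Hölder potential on a mixing subshift of finite type. Mixing follows from topological mixing of $f_0|_{J_0^\ast}$, after choosing the Markov partition so that $\sigma$ is mixing, or after passing to the mixing part of the spectral decomposition. In that setting correlations decay exponentially, so the series $\operatorname{Var}_{\nu_0}(\psi)=\sum_{k\in\mathbb Z}\int \tilde\psi\,\tilde\psi\circ\sigma^{|k|}\,d\nu_0$ converges, where $\tilde\psi=\psi-\int\psi\,d\nu_0$. It is then standard that $\operatorname{Var}_{\nu_0}(\psi)\ge0$ and equals the limit of $\frac1n\int (S_n\tilde\psi)^2d\nu_0$.

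Every term in $\operatorname{Cov}^\sharp_{\nu_0}(g,g)$ is therefore nonnegative. It vanishes exactly when, for $j=1,2$, both $\int g_j\,d\nu_0=0$ and $\operatorname{Var}_{\nu_0}(g_j)=0$. For a real Hölder $\psi$ we now invoke the standard characterization, e.g. \cite[Prop.~4.12]{PU} or Parry--Pollicott \cite{Parry90}: $\operatorname{Var}_{\nu_0}(\psi)=0$ if and only if $\psi-\int\psi\,d\nu_0=u-u\circ\sigma$ for some Hölder $u$.

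We sketch why, since this is the main point. If $\operatorname{Var}=0$, a Gordin-type martingale decomposition, or the spectral gap of the transfer operator applied to $\tilde\psi$, shows that $\tilde\psi$ is an $L^2(\nu_0)$ coboundary. Equivalently, the second derivative $\frac{d^2}{dt^2}P_\sigma(-\delta_0\phi_0+t\psi)|_{t=0}=\operatorname{Var}_{\nu_0}(\psi)$ vanishes, which forces $S_n\tilde\psi(x)=0$ on all periodic orbits. Livšic's theorem, valid for Hölder functions on transitive subshifts of finite type, then upgrades this to a Hölder transfer function $u$. I expect this regularity upgrade to be the only delicate step; I would cite it rather than reprove it.

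With both means equal to zero, we obtain $g_j=u_j-u_j\circ\sigma$, and $u:=u_1+iu_2$ is Hölder with $g=u-u\circ\sigma$. Conversely, if $g=u-u\circ\sigma$ with $u$ Hölder, then $\int g\,d\nu_0=0$ by invariance of $\nu_0$. Moreover $S_ng=u-u\circ\sigma^n$ is uniformly bounded, so $\frac1n\int|S_ng|^2\,d\nu_0\le 4\|u\|_\infty^2/n\to0$, and hence $\operatorname{Cov}^\sharp_{\nu_0}(g,g)=0$.

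The last assertion of the lemma follows directly. For $x\in\operatorname{Per}_n(\sigma)$ we have $\sigma^nx=x$, so $S_ng(x)=u(x)-u(\sigma^nx)=0$.
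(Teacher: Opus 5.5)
Your proposal is correct and follows essentially the same route as the paper: write $\operatorname{Cov}^{\sharp}_{\nu_0}(g,g)=|\int g\,d\nu_0|^2+\operatorname{Cov}_{\nu_0}(g,g)$, use the standard variance criterion (via Liv\v{s}ic) to see that $g$ is H\"older cohomologous to the constant $\int g\,d\nu_0$, conclude from the vanishing of that mean, and sum the coboundary along periodic orbits. Splitting $g$ into real and imaginary parts just makes explicit how the real criterion applies to complex-valued $g$. Your direct check of the converse (via the bounded $S_ng$) fills in a direction the paper leaves implicit.
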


\begin{proof}
We first note that $\operatorname{Cov}^{\sharp}_{\nu_0}(g,g) = \left|\int g\,d\nu_0\right|^2 +
\operatorname{Cov}_{\nu_0}(g,g) = 0$ if and only if both $\int g\,d\nu_0$
and $\operatorname{Cov}_{\nu_0}(g,g)$
vanish.
By the usual variance criterion for mixing subshifts of finite type and H\"older
potentials (see, e.g., \cite{DTUZ26,PU,Parry90}),
$\operatorname{Cov}_{\nu_0}(g,g)=0$
if and only if $g$ is cohomologous to a constant, i.e.,
$g=c+u-u\circ\sigma$ for some H\"older continuous function $u\colon \Sigma_A \to \mathbb C$.
In this case
$\int g\,d\nu_0=c$.
Hence
$\operatorname{Cov}^{\sharp}_{\nu_0}(g,g)=0$
if and only if $g=c+u-u\circ\sigma$ and $c=0$, i.e.,
$g=u-u\circ\sigma$.

Since $g=u-u\circ\sigma$,
the periodic orbit statement follows by summing the coboundary identity along a periodic orbit. This completes the proof.
\end{proof}

\section{The complex multiplier covariance form}

In this section we define the complex multiplier covariance form.
Throughout the section we fix a parameter $\lambda_0\in\Omega$ and a
sufficiently small simply connected neighbourhood $U\Subset\Omega$ of
$\lambda_0$. We use the notation of Section 
\ref{s:prelim}: on the fixed symbolic
model $\sigma:\Sigma_A\to\Sigma_A$ we have a holomorphic family of
logarithmic unstable cocycles
$$
  \zeta_\lambda=\log A_\lambda\in C^\alpha(\Sigma_A,\mathbb C),
  \qquad
  \phi_\lambda=\operatorname{Re}\zeta_\lambda=\log |A_\lambda|.
$$
For simplicity, we set $\delta_0:=\delta(\lambda_0)$, $\phi_0:=\phi_{\lambda_0}$,
and denote by $\nu_0$ the equilibrium state of
$-\delta_0\phi_0$.

\subsection{Definition and well-definedness}

Since $\lambda\mapsto\zeta_\lambda$ is holomorphic as a
$C^\alpha(\Sigma_A,\mathbb C)$-valued map, every vector
$v\in T^{1,0}_{\lambda_0}\Omega$ defines an element
$$
\partial_v\zeta_{\lambda_0}:=
\frac{\partial}{\partial v}\zeta_\lambda \Big|_{\lambda=\lambda_0}
\in C^\alpha(\Sigma_A,\mathbb C).$$
Equivalently, if $\lambda(t)$ is a holomorphic curve with
$\lambda(0)=\lambda_0$ and $\lambda'(0)=v$, then
$$
  \partial_v\zeta_{\lambda_0}
  =
  \left.\frac{d}{dt}\right|_{t=0}\zeta_{\lambda(t)} .
$$
In local holomorphic coordinates $(\lambda_1,\ldots,\lambda_N)$, if
$v=\sum_j v_j\partial/\partial\lambda_j$, then
$$
  \partial_v\zeta_{\lambda_0}
  =
  \sum_{j=1}^N v_j
  \left.\frac{\partial\zeta_\lambda}{\partial\lambda_j}
  \right|_{\lambda=\lambda_0}.
$$

\begin{rmk}\label{r:per-sn}
Observe that if $x\in\operatorname{Per}_n(\sigma)$ and
$p_\lambda:=\pi_\lambda(x)$
is the corresponding holomorphic continuation of the saddle periodic point,
then the Birkhoff sum $S_n(\partial_v\zeta_{\lambda_0})(x)$ gives the infinitesimal variation of the logarithms of unstable multipliers, i.e.,
$$
S_n(\partial_v\zeta_{\lambda_0})(x) = \partial_v\log \mu^u_\lambda(p_\lambda) \big|_{\lambda=\lambda_0}.
$$
The ambiguity in the logarithm is an additive constant in
$2\pi i\mathbb Z$, hence has zero derivative.
\end{rmk}

We now define the {\it complex multiplier covariance form}.

\begin{defn}\label{def:cmc}
For $v,w\in T^{1,0}_{\lambda_0}\Omega$, define
$$
\langle v,w\rangle_{\rm cmc,\lambda_0} := \operatorname{Cov}^{\sharp}_{\nu_0}\bigl(\partial_v\zeta_{\lambda_0},   \partial_w\zeta_{\lambda_0}\bigr). 
$$
\end{defn}

\begin{prop}
The complex multiplier covariance form is independent of the local
branch of $\log A_\lambda$, of the Hölder frame of the pulled-back
unstable bundle, and of the Markov coding used to represent
$f_{\lambda_0}|_{J^\ast_{\lambda_0}}$. Hence it is intrinsically defined on
$T^{1,0}_{\lambda_0}\Omega$.
\end{prop}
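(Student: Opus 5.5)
The plan is to show that the three choices (branch, frame, coding) each change the derivative cocycle $\partial_v\zeta_{\lambda_0}$ only by a coboundary, or transport it by a measure-preserving conjugacy. Then a single invariance property of $\operatorname{Cov}^{\sharp}_{\nu_0}$ finishes the argument. The key property is that for $g,h\in C^\alpha(\Sigma_A,\mathbb C)$ and Hölder $u$, we have $\operatorname{Cov}^{\sharp}_{\nu_0}(g+u-u\circ\sigma,h)=\operatorname{Cov}^{\sharp}_{\nu_0}(g,h)$, and similarly in the second slot. This holds for two reasons. First, $\int (u-u\circ\sigma)\,d\nu_0=0$ by invariance of $\nu_0$, so the mean term is unchanged. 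Second, $S_n(u-u\circ\sigma)=u-u\circ\sigma^n$ is uniformly bounded, so after expanding the product, the cross terms it contributes to the integral are $O(\|u\|_\infty\|S_nh-n\!\int h\|_{L^1(\nu_0)})=O(\sqrt n)$ by Cauchy--Schwarz and the linear growth of the variance. Dividing by $n$ kills them.

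\emph{Branch of the logarithm.} Two branches differ by $\zeta'_\lambda=\zeta_\lambda+2\pi i k$ with $k\colon\Sigma_A\to\mathbb Z$ locally constant. Since both branches are chosen holomorphic in $\lambda$ on the connected set $U$, $k$ does not depend on $\lambda$. Hence $\partial_v\zeta'_{\lambda_0}=\partial_v\zeta_{\lambda_0}$ exactly.

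\emph{Frame.} By Remark \ref{rmk:frame-cobdry}, $\zeta'_\lambda=\zeta_\lambda+b_\lambda-b_\lambda\circ\sigma$, where $B_\lambda=e'_\lambda/e_\lambda$ is holomorphic in $\lambda$ with values in Hölder functions, since both frames are. Choosing $b_\lambda=\log B_\lambda$ holomorphically as in the proof of Proposition \ref{prop:analytic-cocycle} and differentiating gives $\partial_v\zeta'_{\lambda_0}=\partial_v\zeta_{\lambda_0}+\partial_v b_{\lambda_0}-(\partial_v b_{\lambda_0})\circ\sigma$, a Hölder coboundary. The invariance property above then applies. The equilibrium state $\nu_0$ is also unchanged: $\phi'_0$ differs from $\phi_0$ by a real coboundary, so $-\delta_0\phi'_0$ has the same pressure (hence the same $\delta_0$) and the same equilibrium state.

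\emph{Markov coding.} This is the main obstacle, since two codings $\pi_0\colon\Sigma_A\to J_0^\ast$ and $\pi'_0\colon\Sigma_{A'}\to J_0^\ast$ are not directly conjugate. I would avoid comparing symbolic spaces and instead express $\operatorname{Cov}^{\sharp}_{\nu_0}$ through periodic data intrinsic to $f_0|_{J^\ast_0}$. On each model, $S_n(\partial_v\zeta_{\lambda_0})(x)$ at periodic points equals $\partial_v\log\mu^u_\lambda(\pi_\lambda x)$ by Remark \ref{r:per-sn}, and likewise $S_n\phi_0(x)=\log|\mu^u_{\lambda_0}(\pi_0 x)|$. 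The codings are finite-to-one and bijective off a set that is null for all equilibrium states of Hölder potentials. I would therefore push the equilibrium states forward to $J_0^\ast$ and identify both as the equilibrium state $\mu_0$ of the geometric potential $-\delta_0\log|Df_0|_{E^u}|$. The value of $\delta_0$ agrees as well, because pressure is determined by periodic orbit data: $P=\lim\frac1n\log\sum_{\operatorname{Per}_n}e^{S_n}$, with boundary overcounting negligible. Then I would write the mean term as $\int \partial_v\zeta\,d\nu_0$ and the covariance term as a second derivative of pressure. Concretely, $\operatorname{Cov}_{\nu_0}(g,h)$ is obtained by polarization from $\frac{d^2}{ds^2}P(-\delta_0\phi_0+s\,\mathrm{Re}(e^{i\theta}g))|_{s=0}$. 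Both mean and covariance are thus expressed through pressures of potentials whose periodic Birkhoff sums are intrinsic quantities on $J_0^\ast$ (derivatives of log multipliers of continued saddle points). By the periodic-orbit formula for pressure, these coincide for both codings. This periodic-point characterization is the step needing the most care, namely controlling the overcounting of periodic points on the boundaries of the Markov partition. I would handle it with the standard Bowen argument that the boundary contributes strictly smaller exponential growth.
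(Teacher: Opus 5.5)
Your proof is correct. For the branch and the frame it follows the paper's argument, and it is more careful in three small ways that the paper leaves implicit:
\begin{itemize}
\item two branches differ by a locally constant, not necessarily constant, $2\pi i\mathbb Z$-valued function;
\item you actually prove that Hölder coboundaries are invisible to $\operatorname{Cov}^{\sharp}_{\nu_0}$, which the paper only asserts;
\item you check that $\delta_0$ and $\nu_0$ are unchanged under a change of frame, since $\phi_0$ moves by a real coboundary.
\end{itemize}

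The genuine difference is the Markov-coding step. The paper passes to a common symbolic refinement of the two codings. It then invokes invariance of the cocycle, the equilibrium state and the covariance under the resulting finite-to-one symbolic maps.

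You argue differently. You show that every ingredient of the form is a first or second $s$-derivative of a pressure $s\mapsto P_\sigma(-\delta_0\phi_0+s\,\mathrm{Re}(e^{i\theta}(g+ch)))$, with polarization recovering the Hermitian form. You then observe that these pressures are determined by the periodic sums $S_n(\cdot)(x)$, $x\in\operatorname{Per}_n(\sigma)$, which are intrinsic on $J^\ast_0$ by Remark~\ref{r:per-sn}.

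\emph{What your route buys.} It makes manifest that the form depends only on the marked multiplier spectrum of $f_{\lambda_0}$ and its first variation. It also sidesteps the question of what ``common refinement'' means. The naive intersection of two Markov partitions need not be a proper Markov partition, so one would really have to work with the fiber product $\{(x,x')\colon \pi_0(x)=\pi_0'(x')\}$ and verify its properties.

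\emph{What it costs.} The price is the boundary count you flag, which has two parts.
\begin{itemize}
\item You need the closed invariant set carrying the periodic orbits that meet the boundary of the Markov partition to have strictly smaller pressure. This follows from uniqueness and full support of Hölder equilibrium states on the mixing system.
\item When bounding the boundary contribution, you must allow lifts of period-$n$ boundary points that are fixed only by a higher iterate of $\sigma$. This costs a bounded error in $S_n$ and is harmless once you use $(n,\epsilon)$-separated lifts.
\end{itemize}

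\emph{How to skip the counting.} By Cauchy--Riemann, $\mathrm{Re}\,\partial_v\zeta_{\lambda_0}$ and $\mathrm{Im}\,\partial_v\zeta_{\lambda_0}$ are real directional derivatives of $\phi_\lambda$. The function $\phi_\lambda$ is cohomologous to $\log\|Df_\lambda|_{E^u_\lambda}\|\circ\pi_\lambda$. Hence all your potentials are cohomologous to pullbacks of Hölder functions on $J^\ast_0$. Pressure is preserved under the bounded-to-one factor $\pi_0$ by the variational principle, so the intrinsic-ness of your pressures follows without any periodic-point counting.
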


\begin{proof}
Changing the branch of $\log A_\lambda$ on the connected
neighbourhood $U$ changes $\zeta_\lambda$ by a constant in
$2\pi i\mathbb Z$, independent of $\lambda$. Therefore
$\partial_v\zeta_{\lambda_0}$ is unchanged.

We next change the Hölder frame. If
$e'_\lambda(x)=B_\lambda(x)e_\lambda(x)$, then, after choosing a local
logarithm $b_\lambda=\log B_\lambda$, the logarithmic cocycle changes
by
$\zeta'_\lambda
= \zeta_\lambda+b_\lambda-b_\lambda\circ\sigma$ by Remark \ref{rmk:frame-cobdry}.
Hence
$$
\partial_v\zeta'_{\lambda_0}
= \partial_v\zeta_{\lambda_0}
  +\partial_v b_{\lambda_0}
  -\partial_v b_{\lambda_0}\circ\sigma .
$$
The added term is a Hölder coboundary. It has zero integral and zero
asymptotic covariance with every Hölder function. Therefore, we have
$$
\operatorname{Cov}^{\sharp}_{\nu_0}
\bigl(\partial_v\zeta_{\lambda_0}',
\partial_w\zeta_{\lambda_0}'\bigr)
=
\operatorname{Cov}^{\sharp}_{\nu_0}
\bigl(\partial_v\zeta_{\lambda_0},
\partial_w\zeta_{\lambda_0}\bigr),$$
which gives the desired invariance.

Finally, two Markov codings admit a common symbolic refinement. Passing
to the refinement pulls back the cocycle, the equilibrium state and the
covariance form, and these quantities are unchanged under such a
finite-to-one symbolic refinement. The periodic sums of
$\zeta_\lambda$ are intrinsic, since they are the logarithms of the
unstable multipliers. Thus the form computed using any Markov coding is
the same.
\end{proof}

\subsection{Positivity and null directions}
\begin{prop}\label{p:pos-null}
The complex multiplier covariance form is Hermitian and positive
semi-definite. Moreover, for $v\in T^{1,0}_{\lambda_0}\Omega$, we have
$$
  \langle v,v\rangle_{{\rm cmc},\lambda_0}=0
\qquad \Leftrightarrow
\qquad 
\partial_v\zeta_{\lambda_0}
  =
  u-u\circ\sigma$$
for some
H\"older continuous
function $u
\colon \Sigma_A\to\mathbb C$.
\end{prop}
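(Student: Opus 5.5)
The plan is to reduce everything to properties of $\operatorname{Cov}^{\sharp}_{\nu_0}$ on $C^\alpha(\Sigma_A,\mathbb C)$, combined with the complex-linearity of $v\mapsto\partial_v\zeta_{\lambda_0}$. Since $\lambda\mapsto\zeta_\lambda$ is holomorphic (Proposition \ref{prop:analytic-cocycle}), the map $T^{1,0}_{\lambda_0}\Omega\to C^\alpha(\Sigma_A,\mathbb C)$, $v\mapsto\partial_v\zeta_{\lambda_0}$, is $\mathbb C$-linear. This is clear from the coordinate formula $\partial_v\zeta_{\lambda_0}=\sum_j v_j\,\partial\zeta_\lambda/\partial\lambda_j|_{\lambda_0}$. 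It therefore suffices to show that $\operatorname{Cov}^{\sharp}_{\nu_0}$ is a positive semi-definite Hermitian form on $C^\alpha(\Sigma_A,\mathbb C)$, and then to pull it back.

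\textbf{Step 1: sesquilinearity and Hermitian symmetry.} At finite $n$, set
\[
c_n(g,h):=\frac1n\int\bigl(S_ng-n\textstyle\int g\,d\nu_0\bigr)\overline{\bigl(S_nh-n\int h\,d\nu_0\bigr)}\,d\nu_0 .
\]
This expression is linear in $g$, conjugate-linear in $h$, and satisfies $c_n(h,g)=\overline{c_n(g,h)}$. Each of these properties passes to the limit once the limit is known to exist. Existence is the standard fact for H\"older observables over a mixing subshift with a Gibbs measure, coming from exponential decay of correlations \cite{PU,Parry90}. For complex $g,h$, I would write them as $g_1+ig_2$ and $h_1+ih_2$ and reduce to the real asymptotic covariances $\sigma^2(g_j,h_k)$, which exist and equal $\sum_{m\in\mathbb Z}\int \tilde g_j\,\tilde h_k\circ\sigma^{|m|}\,d\nu_0$ (with the appropriate ordering of the two factors), where the tilde denotes subtracting the mean. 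The first term $\int g\,d\nu_0\,\overline{\int h\,d\nu_0}$ is evidently sesquilinear and Hermitian, so $\operatorname{Cov}^\sharp_{\nu_0}$ is a Hermitian form.

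\textbf{Step 2: positivity.} We have $c_n(g,g)=\frac1n\int|S_ng-n\int g|^2\,d\nu_0\ge 0$, so the limit satisfies $\operatorname{Cov}_{\nu_0}(g,g)\ge0$. Adding $|\int g\,d\nu_0|^2\ge0$ gives $\operatorname{Cov}^\sharp_{\nu_0}(g,g)\ge0$.

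\textbf{Step 3: null directions.} By definition, $\langle v,v\rangle_{\mathrm{cmc},\lambda_0}=\operatorname{Cov}^\sharp_{\nu_0}(\partial_v\zeta_{\lambda_0},\partial_v\zeta_{\lambda_0})$. Lemma \ref{lem:Livsic}, applied to $g=\partial_v\zeta_{\lambda_0}\in C^\alpha(\Sigma_A,\mathbb C)$, gives exactly the claimed equivalence with $g$ being a H\"older coboundary $u-u\circ\sigma$.

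\textbf{Main obstacle.} The only point needing care is the complex version of the variance criterion that Lemma \ref{lem:Livsic} invokes. We have $\operatorname{Cov}_{\nu_0}(g,g)=\sigma^2(\operatorname{Re}g)+\sigma^2(\operatorname{Im}g)$, because the cross terms cancel in $|a+ib|^2=a^2+b^2$. So the complex covariance vanishes iff both real variances vanish. By the real criterion, this happens iff $\operatorname{Re}g$ and $\operatorname{Im}g$ are each H\"older-cohomologous to constants, which gives the complex statement. I would spell out this decomposition to make the reduction to the cited real theory transparent.
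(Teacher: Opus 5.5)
Your proposal is correct and follows essentially the same route as the paper: Hermitian symmetry and positive semi-definiteness are inherited from $\operatorname{Cov}^{\sharp}_{\nu_0}$ through the $\mathbb C$-linear map $v\mapsto\partial_v\zeta_{\lambda_0}$, and the characterization of null directions is Lemma~\ref{lem:Livsic} applied to $g=\partial_v\zeta_{\lambda_0}$. Your additional step, writing the complex asymptotic variance of $g$ as the sum of the real asymptotic variances of $\operatorname{Re}g$ and $\operatorname{Im}g$, makes explicit how the complex variance criterion reduces to the real one; the paper leaves this reduction to the cited references.
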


\begin{proof}
Hermitian symmetry and positive semi-definiteness follow from the
corresponding properties of the augmented covariance
$\operatorname{Cov}^{\#}_{\nu_0}$.
The nullity statement follows by applying
Lemma~\ref{lem:Livsic} 
to
$g=\partial_v\zeta_{\lambda_0}$. Thus
$\langle v,v\rangle_{{\rm cmc},\lambda_0}=0$ if and only if
$\partial_v\zeta_{\lambda_0}$ is a H\"older coboundary. 
\end{proof}

\begin{cor}\label{c:per}
If $\langle v,v\rangle_{{\rm cmc},\lambda_0}=0$, then
for every $x\in\operatorname{Per}_n(\sigma)$, we have
$$
S_n(\partial_v\zeta_{\lambda_0})(x)=0.
$$
Equivalently, every marked unstable multiplier has zero infinitesimal
variation in the direction $v$.
\end{cor}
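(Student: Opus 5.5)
The plan is to combine Proposition~\ref{p:pos-null} with Lemma~\ref{lem:Livsic}, and then use Remark~\ref{r:per-sn} to translate the symbolic statement into a statement about multipliers.

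First, assume $\langle v,v\rangle_{{\rm cmc},\lambda_0}=0$. By Proposition~\ref{p:pos-null} there is a H\"older function $u\colon\Sigma_A\to\mathbb C$ with
\[
\partial_v\zeta_{\lambda_0}=u-u\circ\sigma .
\]

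Next, fix $x\in\operatorname{Per}_n(\sigma)$, so that $\sigma^n x=x$. Summing the identity along the orbit, the sum telescopes:
\[
S_n(\partial_v\zeta_{\lambda_0})(x)=\sum_{j=0}^{n-1}\bigl(u(\sigma^jx)-u(\sigma^{j+1}x)\bigr)=u(x)-u(\sigma^nx)=0 .
\]
This is exactly the periodic-orbit clause of Lemma~\ref{lem:Livsic} applied to $g=\partial_v\zeta_{\lambda_0}$, so that lemma can be cited directly.

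For the equivalent formulation, I would invoke Remark~\ref{r:per-sn}. It identifies $S_n(\partial_v\zeta_{\lambda_0})(x)$ with $\partial_v\log\mu^u_\lambda(p_\lambda)\big|_{\lambda=\lambda_0}$, where $p_\lambda=\pi_\lambda(x)$ is the holomorphic continuation of the marked saddle periodic point. The branch ambiguity in the logarithm is locally constant in $2\pi i\mathbb Z$, so it does not affect this derivative.

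Since $\mu^u_\lambda\neq0$, the vanishing of $\partial_v\log\mu^u_\lambda$ is the same as the vanishing of $\partial_v\mu^u_\lambda$. This gives the claimed reformulation for every marked periodic point. I expect no real obstacle. The only point needing care is that the marking $x\mapsto p_\lambda$ comes from the holomorphic motion $h_\lambda$, so the multiplier being differentiated is the one of the continued periodic point; this is built into the definition of $\pi_\lambda$.
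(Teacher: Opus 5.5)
Your proposal is correct and matches the paper's proof: you sum the coboundary identity from Proposition~\ref{p:pos-null} along the periodic orbit so that it telescopes to zero, and then use Remark~\ref{r:per-sn} to read this as the vanishing of $\partial_v\log\mu^u_\lambda(p_\lambda)$ at $\lambda=\lambda_0$. Your added remark that this is equivalent to $\partial_v\mu^u_\lambda=0$, since $\mu^u_\lambda\neq 0$, is a harmless refinement of the same argument.
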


\begin{proof}
Summing the
coboundary identity
given by Proposition \ref{p:pos-null}
over a periodic orbit gives
$S_n(\partial_v\zeta_{\lambda_0})(x)=0$. 
By Remark \ref{r:per-sn}, 
this is the infinitesimal vanishing of the corresponding
logarithmic unstable multiplier.
\end{proof}

\subsection{A Hessian interpretation}
Here we give an  interpretation of the complex multiplier covariance
form as the Hessian of a non-negative real-analytic function. This
interpretation will not be used in the proof of Theorem \ref{thm:path-metric},
where the covariance form itself is
enough, but helps to link
our construction to those in \cite{HeNie23,BH24,BH25}, 
where a Hessian definition is the starting point.

We fix $\lambda_0\in\Omega$ and use the notation of the previous
subsections. For $\lambda$ in a sufficiently small simply connected
neighbourhood $U$ of $\lambda_0$, set
$q_\lambda:=\zeta_\lambda-\zeta_{\lambda_0}$ and
define
$$
G_{\lambda_0}(\lambda) :=
\operatorname{Cov}^{\#}_{\nu_0} (q_\lambda,q_\lambda).
$$
Thus $G_{\lambda_0}\colon U\to\mathbb R$ is obtained by measuring the variation of the
complex unstable derivative cocycle from the base parameter
$\lambda_0$, relative to the equilibrium state
$\nu_0$.

\begin{prop}
The function $G_{\lambda_0}$ is independent of the choices of the
local branch of $\log A_\lambda$ and of the Hölder frame of the
pulled-back unstable bundle. It is real-analytic, satisfies
$G_{\lambda_0}\geq 0$, and has a local minimum at $\lambda_0$.
Moreover, for every
$v,w\in T^{1,0}_{\lambda_0}\Omega$, we have
$$
\partial_v\overline{\partial}_w G_{\lambda_0}(\lambda_0)
  =
\langle v,w\rangle_{{\rm cmc},\lambda_0}.
$$
\end{prop}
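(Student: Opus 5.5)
Independence of choices comes first. A change of branch of $\log A_\lambda$ on $U$ shifts $\zeta_\lambda$ by a constant in $2\pi i\mathbb Z$ that does not depend on $\lambda$, so it cancels in $q_\lambda=\zeta_\lambda-\zeta_{\lambda_0}$. A change of frame $e'_\lambda=B_\lambda e_\lambda$ replaces $q_\lambda$ by $q_\lambda+c_\lambda-c_\lambda\circ\sigma$, where $c_\lambda:=b_\lambda-b_{\lambda_0}$ (Remark~\ref{rmk:frame-cobdry}). A Hölder coboundary has zero $\nu_0$-integral. Its Birkhoff sums are uniformly bounded, so after division by $n$ it contributes nothing to the asymptotic covariance paired with any Hölder function. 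Hence $\operatorname{Cov}^{\sharp}_{\nu_0}$ is unchanged, exactly as in the proof of the well-definedness proposition. Non-negativity is immediate, since $\operatorname{Cov}^{\sharp}_{\nu_0}(g,g)=|\int g\,d\nu_0|^2+\operatorname{Cov}_{\nu_0}(g,g)$ and the asymptotic variance is a limit of non-negative quantities. Because $q_{\lambda_0}=0$, we get $G_{\lambda_0}(\lambda_0)=0$, and so $\lambda_0$ is a (global, hence local) minimum.

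For real-analyticity and the Hessian formula, I will use the structure of $\operatorname{Cov}^{\sharp}_{\nu_0}$ as a bounded sesquilinear form on $C^\alpha(\Sigma_A,\mathbb C)$. The integral part $g\mapsto\int g\,d\nu_0$ is a bounded linear functional. For the covariance part, I will invoke the standard Green–Kubo formula for mixing subshifts of finite type with Gibbs measures (\cite{PU,Parry90}). Writing $\tilde g=g-\int g\,d\nu_0$, it reads
$$
\operatorname{Cov}_{\nu_0}(g,h)=\sum_{k\in\mathbb Z}\int \tilde g\,\overline{\tilde h\circ\sigma^{|k|}}\ \text{(suitably ordered)}\,d\nu_0 .
$$
Exponential decay of correlations for Hölder observables gives absolute convergence, with a bound $C\|g\|_{C^\alpha}\|h\|_{C^\alpha}$. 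So $\operatorname{Cov}^{\sharp}_{\nu_0}=\beta$ is a bounded form, complex-linear in the first slot and conjugate-linear in the second. By Proposition~\ref{prop:analytic-cocycle}, $\lambda\mapsto q_\lambda$ is holomorphic into $C^\alpha$ and vanishes at $\lambda_0$. In local coordinates, expand
$$
q_\lambda=\sum_{|I|\ge1}(\lambda-\lambda_0)^I q_I,
$$
convergent in $C^\alpha$. Then
$$
G_{\lambda_0}(\lambda)=\sum_{I,J}(\lambda-\lambda_0)^I\overline{(\lambda-\lambda_0)^J}\,\beta(q_I,q_J),
$$
which converges absolutely by boundedness of $\beta$. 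So $G_{\lambda_0}$ is real-analytic.

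This is a power series in $(\lambda-\lambda_0)$ and its conjugate, so the mixed second derivative $\partial_{\lambda_j}\overline\partial_{\lambda_k}$ at $\lambda_0$ picks out exactly the term with $I=e_j$, $J=e_k$, namely $\beta(\partial_{\lambda_j}\zeta,\partial_{\lambda_k}\zeta)|_{\lambda_0}$. Sesquilinearity then gives $\partial_v\overline\partial_wG_{\lambda_0}(\lambda_0)=\operatorname{Cov}^{\sharp}_{\nu_0}(\partial_v\zeta_{\lambda_0},\partial_w\zeta_{\lambda_0})=\langle v,w\rangle_{{\rm cmc},\lambda_0}$ by Definition~\ref{def:cmc}. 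In particular there are no pure $\partial\partial$ or $\overline\partial\overline\partial$ contributions, because $\beta$ is sesquilinear and $q_\lambda$ is holomorphic.

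The main obstacle is justifying that $\operatorname{Cov}_{\nu_0}$, defined as a limit, really is a bounded sesquilinear form on $C^\alpha$, with the correct conjugation convention in the complex setting. I would handle this by polarization from the real case. I would write $g=g_1+ig_2$, observe that the defining limit is sesquilinear wherever it exists, and then apply the real Green–Kubo formula and exponential mixing for $\nu_0$, which is the Gibbs measure of the Hölder potential $-\delta_0\phi_0$. Once that is in place, the remaining steps are formal manipulations of convergent power series.
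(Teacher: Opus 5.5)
Your proposal is correct and follows essentially the same route as the paper. Branch changes cancel in $q_\lambda$, frame changes add a H\"older coboundary that $\operatorname{Cov}^{\sharp}_{\nu_0}$ does not see, real-analyticity comes from composing the holomorphic map $\lambda\mapsto q_\lambda$ with the bounded sesquilinear form $\operatorname{Cov}^{\sharp}_{\nu_0}$, and the Hessian formula follows because $q_{\lambda_0}=0$. Your Green--Kubo and decay-of-correlations argument for boundedness, and your explicit power-series computation, spell out details the paper only asserts; note that to get real-analyticity on all of $U$ rather than just near $\lambda_0$, you should run the same expansion around an arbitrary base point $\lambda_1\in U$, keeping the constant term $q_{\lambda_1}$.
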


\begin{proof}
Changing the branch of $\log A_\lambda$ changes both
$\zeta_\lambda$ and $\zeta_{\lambda_0}$ by the same constant in
$2\pi i\mathbb Z$, hence leaves $q_\lambda$ unchanged.
If we replace the Hölder frame by
$e'_\lambda=B_\lambda e_\lambda$, and choose a local logarithm
$b_\lambda=\log B_\lambda$, then
$
\zeta'_\lambda =
\zeta_\lambda+b_\lambda-b_\lambda\circ\sigma 
$ by Remark \ref{rmk:frame-cobdry}.
Therefore, we have
$$
q'_\lambda =
q_\lambda+r_\lambda-r_\lambda\circ\sigma,
\qquad \text{where} \quad
r_\lambda:=b_\lambda-b_{\lambda_0}.
$$
The added term is a Hölder coboundary. It has zero integral and zero
asymptotic covariance with every Hölder function. Hence, we have
$$
\operatorname{Cov}^{\#}_{\nu_0}(q'_\lambda,q'_\lambda)
  =
\operatorname{Cov}^{\#}_{\nu_0}(q_\lambda,q_\lambda).
$$
This proves the independence of the frame.

Since $\lambda\mapsto\zeta_\lambda$ is holomorphic as a
$C^\alpha(\Sigma_A,\mathbb C)$-valued map and
$\operatorname{Cov}^{\#}_{\nu_0}$ is a continuous Hermitian
sesquilinear form on $C^\alpha(\Sigma_A,\mathbb C)$, the function
$G_{\lambda_0}$ is real-analytic. It is non-negative by positivity of
the augmented covariance, and $G_{\lambda_0}(\lambda_0)=0$. Hence
$\lambda_0$ is a local minimum.

It remains to compute the complex Hessian. Since
$q_{\lambda_0}=0$ and $\lambda\mapsto q_\lambda$ is holomorphic,
all terms involving $q_{\lambda_0}$ vanish when differentiating twice
at $\lambda_0$. Thus,
$$
\partial_v\overline{\partial}_w \operatorname{Cov}^{\#}_{\nu_0}(q_\lambda,q_\lambda)
  \big|_{\lambda=\lambda_0}
  =
\operatorname{Cov}^{\#}_{\nu_0}
(\partial_v\zeta_{\lambda_0},  \partial_w\zeta_{\lambda_0}).
$$
By Definition \ref{def:cmc}, 
the right-hand side is precisely
$\langle v,w\rangle_{{\rm cmc},\lambda_0}$.
\end{proof}

\begin{rmk}
The above definition is given
on the fixed symbolic model $\Sigma_A$.
This can be viewed as a symbolic version of
the geometric formulation using the pushed-forward
equilibrium state as in \cite{HeNie23,BH24,BH25}.
Indeed, the structural conjugacy
$h_\lambda\colon J_{\lambda_0}^\ast\to J_\lambda^\ast$
pushes $\nu_0$ forward to an $f_\lambda$-invariant probability measure
$\nu_{\lambda_0,\lambda}:=(h_\lambda)_*\nu_0
$
on $J_\lambda^\ast$.
In symbolic coordinates, this corresponds to
integrating $\zeta_\lambda$ against the fixed measure $\nu_0$ on
$\Sigma_A$.
Thus,
%
defining 
the {\it complex unstable Lyapunov functional based at
$\lambda_0$} as
$$
\operatorname{Ly}^{u,\mathbb C}_{\lambda_0}(\lambda):=\int_{\Sigma_A}\zeta_\lambda\,d\nu_0,
$$ 
we have
\begin{align*}
\operatorname{Cov}^{\sharp}_{\nu_0}(\zeta_\lambda-\zeta_{\lambda_0},\zeta_\lambda-\zeta_{\lambda_0})
& = \left|\int_{\Sigma_A} \zeta_\lambda-\zeta_{\lambda_0}\,d\nu_0\right|^2
+\operatorname{Cov}_{\nu_0}(\zeta_\lambda-\zeta_{\lambda_0},\zeta_\lambda-\zeta_{\lambda_0})\\
& = \left|\operatorname{Ly}^{u,\mathbb C}_{\lambda_0}(\lambda) -
\operatorname{Ly}^{u,\mathbb C}_{\lambda_0}(\lambda_0)\right|^2
+\operatorname{Cov}_{\nu_0}(\zeta_\lambda-\zeta_{\lambda_0},\zeta_\lambda-\zeta_{\lambda_0}).
\end{align*}
\end{rmk}

\section{Analyticity of the complex multiplier covariance form and length vanishing}

In this section we prove the two main
dynamical properties of the complex multiplier covariance form that will be used to obtain the path metric. First, we prove that the associated quadratic form on the real tangent bundle is real-analytic. Then we show that a path whose tangent vectors have zero norm has constant marked unstable multiplier spectrum. The multiplier rigidity theorem of Cantat--Dujardin 
\cite{CD26Henon}
then implies that every non-constant piecewise $C^1$-path has positive length.

\subsection{Analyticity on the tangent bundle}
Let $T_{\mathbb R}\Omega$ denote the real tangent bundle of $\Omega$. If
$\xi\in T_{\mathbb R,\lambda}\Omega$, we write
$\xi^{1,0}\in T^{1,0}_\lambda\Omega$
for its $(1,0)$-part,
 so that in local holomorphic coordinates
\(z_j=x_j+iy_j\), if
\[
\xi=\sum_j a_j\frac{\partial}{\partial x_j}
    +b_j\frac{\partial}{\partial y_j},
\qquad
\text{then}
\qquad
\xi^{1,0}=\sum_j(a_j+ib_j)\frac{\partial}{\partial z_j}.
\]
We define the real quadratic form associated to the
complex multiplier covariance form by
$$
Q(\lambda,\xi) :=
\langle \xi^{1,0},\xi^{1,0}\rangle_{\rm cmc,\lambda}.
$$
Thus $Q$ is a non-negative quadratic form on $T_{\mathbb R}\Omega$.

In local holomorphic coordinates, set
$$
\dot\zeta_{\lambda,\xi} :=
\partial_{\xi^{1,0}}\zeta_\lambda \in C^\alpha(\Sigma_A,\mathbb C).
$$
Since $\lambda\mapsto\zeta_\lambda$ is holomorphic, if
$\gamma(t)$ is a real $C^1$-path with
$\gamma(0)=\lambda$
and $\dot\gamma(0)=\xi$,
then
$$
\left.\frac{d}{dt}\right|_{t=0}\zeta_{\gamma(t)}
=\partial_{\xi^{1,0}}\zeta_\lambda
=\dot\zeta_{\lambda,\xi},   
\qquad
\text{and}
\qquad
Q(\lambda,\xi) =
\operatorname{Cov}^{\sharp}_{\nu_\lambda}\bigl(\dot\zeta_{\lambda,\xi}, \dot\zeta_{\lambda,\xi}\bigr),
$$
where $\nu_\lambda$ is the equilibrium state for
$-\delta(\lambda)\phi_\lambda$.

The next proposition proves the analyticity of the form on the tangent bundle. 
We remark that 
in \cite{BH24,BH25}
the analyticity is proved by a different
method, based on the extension techniques of 
\cite{SU10,UZ04real}.

\begin{prop}\label{prop:analyticity-tangent-bundle}
The map
$Q\colon T_{\mathbb R}\Omega\to \mathbb R_{\ge 0}$ given by
$(\lambda,\xi)\mapsto Q(\lambda,\xi)$
is real-analytic. In particular, its restriction to the unit tangent bundle of
$\Omega$, with respect to any auxiliary real-analytic Riemannian metric, is
real-analytic.
\end{prop}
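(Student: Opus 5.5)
Since the statement is local, I will fix $\lambda_1\in\Omega$ and work on a small simply connected neighbourhood $U$ of $\lambda_1$ with holomorphic coordinates. On $U$ I will use the fixed symbolic model $\sigma\colon\Sigma_A\to\Sigma_A$ and the holomorphic family $\lambda\mapsto\zeta_\lambda\in C^\alpha(\Sigma_A,\mathbb C)$ given by Proposition~\ref{prop:analytic-cocycle}. Two things then vary with the point $\lambda\in U$: the function being measured, $\dot\zeta_{\lambda,\xi}=\sum_j\xi^{1,0}_j\,\partial\zeta_\lambda/\partial\lambda_j$, and the reference measure $\nu_\lambda$. The former is real-linear in $\xi$, holomorphic in $\lambda$, and a convergent power series in $C^\alpha$, since derivatives of holomorphic Banach-valued maps are holomorphic. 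So it suffices to show that
\[
(\lambda,g,h)\mapsto\operatorname{Cov}^{\sharp}_{\nu_\lambda}(g,h)
\]
is real-analytic in $\lambda$ and continuous sesquilinear in $(g,h)\in C^\alpha(\Sigma_A,\mathbb C)^2$. Composing with the real-analytic map $(\lambda,\xi)\mapsto\dot\zeta_{\lambda,\xi}$ then gives the claim.

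The key step is to write both pieces of $\operatorname{Cov}^\sharp$ as derivatives of the pressure function. With $\psi_\lambda:=-\delta(\lambda)\phi_\lambda$, the standard formulas for a mixing subshift of finite type give, for real $g,h$,
\[
\int g\,d\nu_\lambda=\partial_s P_\sigma(\psi_\lambda+sg)\big|_{s=0},\qquad
\operatorname{Cov}_{\nu_\lambda}(g,h)=\partial_s\partial_t P_\sigma(\psi_\lambda+sg+th)\big|_{s=t=0}.
\]
For complex $g,h$ I will decompose into real and imaginary parts and extend sesquilinearly; this matches the definition with the complex conjugate. By the lemma on $\delta$, $\lambda\mapsto\delta(\lambda)$ is real-analytic, and by Proposition~\ref{prop:analytic-cocycle} so is $\lambda\mapsto\phi_\lambda$. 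Hence $\lambda\mapsto\psi_\lambda\in C^\alpha(\Sigma_A,\mathbb R)$ is real-analytic. Since $P_\sigma$ is real-analytic on $C^\alpha(\Sigma_A,\mathbb R)$ (the references \cite{PU,Parry90} already invoked), the map $(\lambda,s,t,g,h)\mapsto P_\sigma(\psi_\lambda+sg+th)$ is real-analytic. Its first and second derivatives in $(s,t)$ at $0$ are therefore real-analytic in $(\lambda,g,h)$. These derivatives are exactly the Fréchet derivatives $DP_\sigma(\psi_\lambda)[g]$ and $D^2P_\sigma(\psi_\lambda)[g,h]$, which are multilinear and bounded in $(g,h)$. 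Composing the analytic map $\lambda\mapsto\psi_\lambda$ with the analytic Fréchet derivatives of $P_\sigma$ gives analyticity of $\lambda\mapsto DP_\sigma(\psi_\lambda)$ and $\lambda\mapsto D^2P_\sigma(\psi_\lambda)$ as maps into spaces of bounded multilinear forms.

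To conclude, I will substitute $g=h=\dot\zeta_{\lambda,\xi}$. Then $Q(\lambda,\xi)$ equals $|DP_\sigma(\psi_\lambda)[\dot\zeta_{\lambda,\xi}]|^2+D^2P_\sigma(\psi_\lambda)[\dot\zeta_{\lambda,\xi},\overline{\dot\zeta_{\lambda,\xi}}]$, where the complex-linear extensions are used. This is a composition of real-analytic maps between Banach spaces, so it is real-analytic on $TU$, hence on $T_{\mathbb R}\Omega$. The local definitions agree on overlaps because the form is intrinsic, by the proposition on independence of branch, frame and coding. The statement about the unit tangent bundle follows because that bundle is a real-analytic submanifold of $T_{\mathbb R}\Omega$ when the auxiliary metric is real-analytic.

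I expect the main obstacle to be justifying real-analyticity of $P_\sigma$ as a function on the Banach space $C^\alpha$, rather than only along finite-dimensional analytic families, and identifying its second derivative with the asymptotic covariance. The route I will try first is the spectral approach: the transfer operator $\mathcal L_\psi$ depends holomorphically on complex $\psi\in C^\alpha(\Sigma_A,\mathbb C)$, and it has a simple isolated leading eigenvalue $e^{P(\psi)}$ for real $\psi$. Analytic perturbation theory then extends $P$ holomorphically to a complex neighbourhood of each real potential. The derivative formulas are the standard ones from \cite{PU,Parry90}, obtained after normalizing the potential so that $\mathcal L_{\psi_\lambda}1=1$.
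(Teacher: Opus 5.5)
Your proposal is correct and takes essentially the same approach as the paper. In both, $\int\dot\zeta_{\lambda,\xi}\,d\nu_\lambda$ and $\operatorname{Cov}_{\nu_\lambda}(\dot\zeta_{\lambda,\xi},\dot\zeta_{\lambda,\xi})$ are first and second pressure derivatives at $-\delta(\lambda)\phi_\lambda$, split into real and imaginary parts and composed with the real-analytic map $(\lambda,\xi)\mapsto\dot\zeta_{\lambda,\xi}$; the only difference is that you use Fr\'echet derivatives of $P_\sigma$ on $C^\alpha(\Sigma_A,\mathbb R)$, while the paper differentiates $P_\sigma(-\delta(\lambda)\phi_\lambda+s_1\operatorname{Re}\dot\zeta_{\lambda,\xi}+s_2\operatorname{Im}\dot\zeta_{\lambda,\xi})$ in $(s_1,s_2)$, both resting on the same spectral perturbation of the Ruelle operator.
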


\begin{proof}
We work in a holomorphic coordinate neighborhood
$U\Subset\Omega$. By Proposition \ref{prop:analytic-cocycle}, the map
$\lambda\mapsto \zeta_\lambda
$ is holomorphic as a map from $U$ to
$C^\alpha(\Sigma_A,\mathbb C)$.
Therefore, the map
$$(\lambda,\xi)\mapsto \dot\zeta_{\lambda,\xi} =
\partial_{\xi^{1,0}}\zeta_\lambda
$$
is real-analytic from $T_{\mathbb R}U$ to $C^\alpha(\Sigma_A,\mathbb C)$. 

It remains to justify the real-analytic dependence of the covariance term.
Set
$F_\lambda:=-\delta(\lambda)\phi_\lambda.
$
Then $F_\lambda$ is a real-analytic family of real-valued H\"older potentials,
and
$P_\sigma(F_\lambda)=0.
$
Let $\mathcal L_\lambda$ be the associated Ruelle operator
$$
\mathcal L_\lambda u(x):=
\sum_{\sigma (y)=x}e^{F_\lambda(y)}u(y).
$$
Since $F_\lambda$ depends real-analytically on $\lambda$, the family
$\mathcal L_\lambda \colon C^\alpha(\Sigma_A)\to C^\alpha(\Sigma_A)$ depends
real-analytically on $\lambda$. The leading eigenvalue is simple and isolated.
Hence the leading eigenprojection, the equilibrium state $\nu_\lambda$, and
the correlation functions depend real-analytically on $\lambda$.

To make the dependence of the covariance explicit, consider the pressure function $$ \mathcal P(s_1,s_2,\lambda,\xi) := P_\sigma\left( F_\lambda +s_1\operatorname{Re}\dot\zeta_{\lambda,\xi} +s_2\operatorname{Im}\dot\zeta_{\lambda,\xi} \right). $$ It is jointly real-analytic for $(s_1,s_2)$ near $(0,0)$ and $(\lambda,\xi)\in T_{\mathbb R}U$. Its first derivatives at $(s_1,s_2)=(0,0)$ give $$ \int \operatorname{Re}\dot\zeta_{\lambda,\xi}\,d\nu_\lambda, \qquad \int \operatorname{Im}\dot\zeta_{\lambda,\xi}\,d\nu_\lambda,$$
and its second derivatives give the real asymptotic covariances of $\operatorname{Re}\dot\zeta_{\lambda,\xi}$ and $\operatorname{Im}\dot\zeta_{\lambda,\xi}$. Since the Hermitian covariance of a complex-valued function is determined by the real covariance matrix of its real and imaginary parts, it follows that the map
$$ (\lambda,\xi)\mapsto \operatorname{Cov}_{\nu_\lambda} (\dot\zeta_{\lambda,\xi},\dot\zeta_{\lambda,\xi}) $$ is real-analytic.
 The augmented term $$ \left|\int \dot\zeta_{\lambda,\xi}\,d\nu_\lambda\right|^2 $$ is real-analytic as well. Therefore, the function $$ (\lambda,\xi)\mapsto \operatorname{Cov}^{\#}_{\nu_\lambda} (\dot\zeta_{\lambda,\xi},\dot\zeta_{\lambda,\xi}) $$ is real-analytic, which gives the real-analyticity of $Q$.
\end{proof}

\subsection{Null directions along a path}
Let $\gamma \colon I\to\Omega$ be a $C^1$-path and fix $s_0\in I$. After
shrinking $I$ if necessary, structural stability gives conjugacies
$h_{s,s_0}\colon J_{\gamma(s_0)}^\ast\to J_{\gamma(s)}^\ast$
satisfying
$h_{s,s_0}\circ f_{\gamma(s_0)}=
f_{\gamma(s)}\circ h_{s,s_0}$.
Using the symbolic model for $f_{\gamma(s_0)}|_{J_{\gamma(s_0)}^\ast}$,
we pull back the logarithmic unstable cocycle of $f_{\gamma(s)}$ to a
H\"older function
$\zeta_{\gamma(s)}^{(s_0)}\in C^\alpha(\Sigma_A,\mathbb C).$
We define 
$$K_\gamma(s,s_0):=
\zeta_{\gamma(s)}^{(s_0)}-
\zeta_{\gamma(s_0)}^{(s_0)}.$$

For $x\in\operatorname{Per}_n(\sigma)$, if $p_s$ denotes the corresponding
continuation of the saddle periodic point, then
$$
S_nK_\gamma(s,s_0)(x) = \log \mu^u_{\gamma(s)}(p_s)
- \log \mu^u_{\gamma(s_0)}(p_{s_0})
\quad \mod 2\pi i \mathbb Z.
$$
Thus $K_\gamma(s,s_0)$ is the change of the actual complex unstable
multiplier spectrum.

\begin{lem}\label{lem:5.2}
Let $\gamma \colon I\to\Omega$ be a $C^1$-path. Suppose that
$Q(\gamma(s),\dot\gamma(s))=0$ for some $s\in I$. Then
$\left.\frac{\partial}{\partial t}\right|_{t=s}
K_\gamma(t,s)$
is a H\"older coboundary over $\sigma$. In particular, for every
$x\in\operatorname{Per}_n(\sigma)$, we have
$$
\left.\frac{\partial}{\partial t}\right|_{t=s}
S_nK_\gamma(t,s)(x)=0.
$$
\end{lem}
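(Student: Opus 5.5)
The plan is to reduce the statement to Proposition~\ref{p:pos-null} and Corollary~\ref{c:per}, applied at the base parameter $\lambda_0:=\gamma(s)$. We use the symbolic model $\sigma\colon\Sigma_A\to\Sigma_A$ attached to $f_{\gamma(s)}$ and the conjugacies $h_{t,s}$ for $t$ near $s$. On a small simply connected neighbourhood $U$ of $\gamma(s)$, structural stability gives a holomorphic family $\lambda\mapsto\zeta^{(s)}_\lambda\in C^\alpha(\Sigma_A,\mathbb C)$ by Proposition~\ref{prop:analytic-cocycle}. For $t$ close to $s$ this family satisfies $\zeta^{(s)}_{\gamma(t)}=\zeta_{\gamma(t)}^{(s)}$, possibly after adjusting by a Hölder coboundary and a constant in $2\pi i\mathbb Z$ coming from the choice of frame and branch. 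The first step is to arrange these choices so that $K_\gamma(t,s)=\zeta^{(s)}_{\gamma(t)}-\zeta^{(s)}_{\gamma(s)}$ exactly. Since $h_{t,s}$ agrees with the holomorphic conjugacy $h_{\gamma(t)}$ by uniqueness of the structurally stable conjugacy near the identity, we can take the same frame, so no extra coboundary appears.

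The second step is the chain rule. The map $\lambda\mapsto\zeta^{(s)}_\lambda$ is holomorphic into the Banach space $C^\alpha(\Sigma_A,\mathbb C)$, and $\gamma$ is $C^1$. Hence $t\mapsto\zeta^{(s)}_{\gamma(t)}$ is $C^1$ into $C^\alpha$, with
\[
\left.\frac{\partial}{\partial t}\right|_{t=s}K_\gamma(t,s)=\partial_{\dot\gamma(s)^{1,0}}\zeta^{(s)}_{\gamma(s)}=\dot\zeta_{\gamma(s),\dot\gamma(s)}.
\]
This is exactly the identity recorded before Proposition~\ref{prop:analyticity-tangent-bundle}. Holomorphy kills the $(0,1)$-part of the derivative.

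The third step uses the hypothesis $Q(\gamma(s),\dot\gamma(s))=0$, which says $\langle v,v\rangle_{{\rm cmc},\gamma(s)}=0$ with $v=\dot\gamma(s)^{1,0}$. Proposition~\ref{p:pos-null} then gives a Hölder $u$ with $\partial_v\zeta^{(s)}_{\gamma(s)}=u-u\circ\sigma$, which is the first claim. For the periodic statement, $S_n$ is a finite sum of evaluations, so it commutes with the $t$-derivative, taken pointwise or in $C^\alpha$. Therefore $\partial_t|_{t=s}S_nK_\gamma(t,s)(x)=S_n(u-u\circ\sigma)(x)=u(x)-u(\sigma^n x)=0$ for $x\in\operatorname{Per}_n(\sigma)$, as in Corollary~\ref{c:per}.

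The main point needing care is the first step: identifying the pulled-back cocycle along the path with the holomorphic family at the base point $\gamma(s)$. The $2\pi i\mathbb Z$ ambiguity is locally constant, so it does not affect derivatives. A frame change contributes a coboundary term $\partial_t(b_t-b_t\circ\sigma)$, and this is still a Hölder coboundary, so the conclusion is unaffected in any case. This is the same argument as in Remark~\ref{rmk:frame-cobdry}.
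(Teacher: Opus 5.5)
Your proposal is correct and takes essentially the same route as the paper. Both apply Proposition~\ref{p:pos-null} at the base parameter $\gamma(s)$ to $v=\dot\gamma(s)^{1,0}$, identify $\partial_t|_{t=s}K_\gamma(t,s)$ with $\partial_v\zeta^{(s)}_{\gamma(s)}$, and then sum the resulting coboundary over periodic orbits as in Corollary~\ref{c:per}. Your added steps (the chain rule with holomorphy removing the $(0,1)$-part, uniqueness of the conjugacy, and the frame/branch ambiguities) make explicit what the paper leaves implicit.
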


\begin{proof}
We work with the symbolic model based at the parameter $\gamma(s)$. By definition, we have
$$ Q(\gamma(s),\dot\gamma(s)) = \operatorname{Cov}^{\#}_{\nu_{\gamma(s)}} \left( \left.\frac{\partial}{\partial t}\right|_{t=s} \zeta^{(s)}_{\gamma(t)}, \left.\frac{\partial}{\partial t}\right|_{t=s} \zeta^{(s)}_{\gamma(t)} \right). $$ Since this quantity is zero, Proposition~\ref{p:pos-null} 
applied at the parameter $\gamma(s)$ to the vector $\dot\gamma(s)^{1,0}$ implies that
$\left.\frac{\partial}{\partial t}\right|_{t=s} \zeta^{(s)}_{\gamma(t)}$
is a Hölder coboundary. Since $K_\gamma(t,s)=\zeta^{(s)}_{\gamma(t)} -\zeta^{(s)}_{\gamma(s)}$,
the same is true for $$ \left.\frac{\partial}{\partial t}\right|_{t=s} K_\gamma(t,s). $$ Summing this coboundary over a periodic orbit gives the assertion; see Corollary \ref{c:per}.
\end{proof}

\begin{cor}\label{cor:multiplier-constant}
Let $\gamma\colon I\to\Omega$ be a $C^1$-path. If
$Q(\gamma(s),\dot\gamma(s))=0$ for every $s\in I$,
then every marked saddle periodic point has constant unstable multiplier along
$\gamma$. That is, for every saddle periodic point $p_{s_0}$ of
$f_{\gamma(s_0)}$, its continuation $p_s$ satisfies
$\mu^u_{\gamma(s)}(p_s)=\mu^u_{\gamma(s_0)}(p_{s_0})$
for every $s\in I$.
\end{cor}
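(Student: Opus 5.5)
The plan is to fix a marked saddle periodic point and show that the function $s\mapsto \log\mu^u_{\gamma(s)}(p_s)$, with a continuous choice of branch, has zero derivative everywhere on $I$. Since $I$ is connected, this makes it constant, and exponentiating gives the claim. Throughout, $p_s$ denotes the continuation of $p_{s_0}$, which is well defined along the whole path because $\gamma$ stays in the hyperbolic component $\Omega$. Concretely, cover $I$ by finitely many (or locally finitely many) subintervals on each of which a single structural conjugacy and a single symbolic model are available. On each piece $p_s$ is $\pi_{\gamma(s)}(x)$ for a fixed $x\in\operatorname{Per}_n(\sigma)$. The continuations obtained on overlapping pieces agree, because holomorphic continuation of a saddle point along a path is unique: hyperbolic periodic points persist uniquely nearby by the implicit function theorem.

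The key step is to compute the derivative of $m(t):=\log\mu^u_{\gamma(t)}(p_t)$ at an arbitrary $s\in I$. I would use the symbolic model based at $\gamma(s)$ together with the identity displayed before Lemma~\ref{lem:5.2}. For $t$ near $s$, $S_nK_\gamma(t,s)(x)$ equals $m(t)-m(s)$ modulo $2\pi i\mathbb Z$. Both sides are continuous in $t$ and vanish at $t=s$, so the $2\pi i\mathbb Z$ ambiguity is constantly zero near $s$ and the two sides agree exactly. Differentiating at $t=s$ and applying Lemma~\ref{lem:5.2}, which is valid because $Q(\gamma(s),\dot\gamma(s))=0$ by hypothesis, gives $m'(s)=0$.

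For this differentiation I need $t\mapsto K_\gamma(t,s)$ to be $C^1$ into $C^\alpha$. This follows from Proposition~\ref{prop:analytic-cocycle}, since $\zeta^{(s)}_{\gamma(t)}$ is the composition of the holomorphic map $\lambda\mapsto\zeta^{(s)}_\lambda$ with the $C^1$ path $\gamma$. Then the Birkhoff sum at a fixed point $x$ is a $C^1$ real-variable function of $t$.

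Since $m$ is $C^1$ with $m'\equiv0$ on the interval $I$, it is constant. Hence $\mu^u_{\gamma(s)}(p_s)=e^{m(s)}=e^{m(s_0)}=\mu^u_{\gamma(s_0)}(p_{s_0})$. I expect the only delicate point to be bookkeeping rather than mathematics. First, the symbolic models vary with the base parameter $s$, but the multiplier $\mu^u$ is intrinsic, so no compatibility between codings is needed beyond the uniqueness of continuations. Second, the locally chosen logarithms patch into a global continuous branch of $m$ along $I$; this holds because $t\mapsto \mu^u_{\gamma(t)}(p_t)$ is a continuous nonvanishing function on an interval and so admits a continuous logarithm.
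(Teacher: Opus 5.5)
Your proposal is correct and follows essentially the same route as the paper: at each $s\in I$ you work in the symbolic model based at $\gamma(s)$ and use Lemma~\ref{lem:5.2} to get $m'(s)=0$ for $m(t)=\log\mu^u_{\gamma(t)}(p_t)$, then conclude by connectedness of $I$. The additional bookkeeping you include makes explicit what the paper only treats briefly: removing the $2\pi i\mathbb Z$ ambiguity by continuity, the $C^1$ dependence of $K_\gamma(t,s)$, uniqueness of continuations, and a global continuous branch of the logarithm.
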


\begin{proof}
Fix $s_0\in I$, and let $p_{s_0}$ be a saddle periodic point of
$f_{\gamma(s_0)}$ of period $n$. Let $p_s$ denote its continuation along $\gamma$. 
It is enough to show that the map $s\mapsto \mu^u_{\gamma(s)}(p_s)$
is constant. Let $s\in I$ be arbitrary.
We apply Lemma \ref{lem:5.2} with base time $s$. Choose the symbolic model based at $f_{\gamma(s)}|_{J^\ast_{\gamma(s)}}$,
and let $x\in \operatorname{Per}_n(\sigma)$ code the marked periodic point
$p_s$. Lemma \ref{lem:5.2} gives
$$
        \frac{\partial}{\partial t}\bigg|_{t=s}
        S_nK_\gamma(t,s)(x)=0 .
$$
On the other hand, by the definition of $K_\gamma(t,s)$, we have
$$
        S_nK_\gamma(t,s)(x)
        =
        \log \mu^u_{\gamma(t)}(p_t)
        -
        \log \mu^u_{\gamma(s)}(p_s)
        \quad \mod 2\pi i\mathbb Z .
$$
After choosing a local branch of the logarithm near $s$, the ambiguity in
$2\pi i\mathbb Z$ is locally constant and hence has zero derivative.
Therefore, we have
$$
        \frac{d}{dt}\bigg|_{t=s}
        \log \mu^u_{\gamma(t)}(p_t)=0 .
$$
Since $s\in I$ was arbitrary, the function
$s \mapsto \log \mu^u_{\gamma(s)}(p_s)$
has zero derivative locally along $I$. Hence it is constant on $I$, and so
$\mu^u_{\gamma(s)}(p_s)=\mu^u_{\gamma(s_0)}(p_{s_0})
$
for every $s\in I$. Since the marked periodic point was arbitrary, every
marked saddle multiplier is constant along $\gamma$.
\end{proof}

\subsection{Positive length of non-constant piecewise $C^1$-paths}
For a piecewise $C^1$-path $\gamma \colon [0,1]\to\Omega$, define its
complex multiplier covariance
length
by
$$
L_{\rm cmc}(\gamma) := \int_0^1
\sqrt{Q(\gamma(t),\dot\gamma(t))}\,dt.
$$

\begin{prop}\label{prop:positive-length}
Every non-constant piecewise $C^1$-path
$\gamma\colon [0,1]\to\Omega$
satisfies
$L_{\rm cmc}(\gamma)>0$.
\end{prop}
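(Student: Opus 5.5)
I will argue by contradiction. Suppose $\gamma\colon[0,1]\to\Omega$ is a non-constant piecewise $C^1$-path with $L_{\rm cmc}(\gamma)=0$. Since $Q\ge 0$ and $t\mapsto \sqrt{Q(\gamma(t),\dot\gamma(t))}$ is continuous on each of the finitely many closed subintervals on which $\gamma$ is $C^1$ (by Proposition~\ref{prop:analyticity-tangent-bundle}), the vanishing of the integral forces $Q(\gamma(t),\dot\gamma(t))=0$ for every $t$ in each such subinterval, with one-sided derivatives at the break points. Because $\gamma$ is non-constant, at least one $C^1$ piece $\gamma|_{[a,b]}$ is non-constant, and I will restrict attention to it.

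On $[a,b]$, Corollary~\ref{cor:multiplier-constant} applies directly. It says that every marked saddle periodic point of $f_{\gamma(a)}$ has, along its continuation, a constant unstable multiplier $\mu^u_{\gamma(s)}(p_s)$ for $s\in[a,b]$. Combining this with the structural conjugacies $h_{s,a}$, which transport the marking of periodic points, I conclude that for every $s\in[a,b]$ the maps $f_{\gamma(a)}$ and $f_{\gamma(s)}$ are topologically conjugate on $J^\ast$ by a conjugacy that preserves the unstable multipliers of all periodic points.

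The key step is then to invoke the multiplier rigidity theorem of Cantat--Dujardin \cite{CD26Henon}. In the form I need, it says that two (hyperbolic, or more generally weakly stable) Hénon maps in the same family, conjugate on $J^\ast$ by a conjugacy preserving the unstable multipliers, are conjugate by an affine (or polynomial) automorphism. In the normal-form space this means they coincide up to the finite ambiguity described in Remark~\ref{rmk:finite-conjugacy}. Hence $\gamma(s)$ lies in the finite orbit of $\gamma(a)$ under that residual group for every $s\in[a,b]$. Since $[a,b]$ is connected, $\gamma$ is continuous, and a finite set is discrete, $\gamma$ is constant on $[a,b]$. This contradicts the choice of the piece, so $L_{\rm cmc}(\gamma)>0$.

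I expect the main obstacle to be matching the hypotheses of the Cantat--Dujardin theorem precisely. Depending on its formulation, one may need to check the following: whether only unstable multipliers (rather than both eigenvalues) suffice, which is presumably why the Jacobian is fixed in $\mathcal H^k_{\mathbf d,\mathbf a}$, since the stable multiplier is then determined by the Jacobian; whether the conclusion concerns the moduli space rather than the normal forms, which is handled by the finiteness of the residual ambiguity; and, in the case $k=1$ with varying Jacobian, whether the Jacobian is recovered from the multiplier data. If the theorem only gives local rigidity, or rigidity for a subset of periodic points, I would additionally use the real-analyticity of $\lambda\mapsto \mu^u_\lambda(p_\lambda)$ to pass to a finite set of periodic points and conclude via a discreteness or analytic-fiber argument.
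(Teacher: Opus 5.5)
Your proposal is correct and follows essentially the same route as the paper: vanishing of $L_{\rm cmc}$ forces $Q(\gamma(t),\dot\gamma(t))=0$ on each $C^1$ piece, Corollary~\ref{cor:multiplier-constant} then makes the marked unstable multiplier spectrum constant, and the Cantat--Dujardin rigidity theorem puts the path in a finite fiber, so connectedness forces it to be constant. The only difference is cosmetic: you restrict to a single non-constant $C^1$ piece instead of concluding constancy of the spectrum along all of $\gamma$, which avoids gluing at the break points.
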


\begin{proof}
Suppose, by contradiction, that
$L_{\rm cmc}(\gamma)=0.
$
Since the integrand is continuous and non-negative on each $C^1$-piece, we
have
$Q(\gamma(t),\dot\gamma(t))=0$
for every $t$ on each smooth piece of the path.

By Corollary \ref{cor:multiplier-constant}, the
marked unstable multiplier spectrum is constant along $\gamma$. Hence,
 the image $\gamma([0,1])$ is contained in a single fiber of the marked
unstable multiplier spectrum.
By the
multiplier rigidity theorem of Cantat--Dujardin \cite{CD26Henon}, this fiber is
finite. 
Since $\gamma([0,1])$ is connected
and contained in a finite set, it consists of a single point. Hence $\gamma$
is constant, giving a contradiction.
\end{proof}

\section{A reduction argument and proof of Theorem \ref{thm:path-metric}}\label{s:reduction-proof}

We now pass from positivity of individual non-constant
piecewise $C^1$-paths to separation of points by the induced path pseudo-distance. This step is necessary since, a priori, the infimum of the lengths of paths joining two distinct points could be zero even if no individual non-constant path has zero length. The analyticity of $Q$ rules this out by a reduction argument as in \cite[Section 5.4]{BH24}.

\begin{lem}\label{lem:semi-reduction}
Let $M$ be a real-analytic manifold and let $Q$ be a real-analytic
positive semi-definite quadratic form on $TM$. Assume that every non-constant
piecewise $C^1$-path in $M$ has positive $Q$-length. Then the path
pseudo-distance induced by $Q$ separates points locally.
\end{lem}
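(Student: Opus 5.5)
We argue by contradiction and by induction on $\dim M$, exploiting the fact that real-analyticity makes the degeneracy locus of $Q$ a stratified analytic set. The statement is local, so we fix $p\in M$ and work in a relatively compact real-analytic chart $V\ni p$ with an auxiliary Euclidean metric $|\cdot|$. For $x\in V$ let $N_x:=\{\xi\in T_xM:\ Q(x,\xi)=0\}$. Since $Q$ is positive semi-definite, $N_x$ is the kernel of the associated symmetric matrix $(Q_{ij}(x))$, which depends real-analytically on $x$. We will show that for every $q\in V\setminus\{p\}$ one has $d_Q(p,q)>0$, after possibly shrinking $V$.

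\emph{Step 1 (the degeneracy set is thin).} Let $D:=\{x\in V:\ N_x\neq 0\}=\{\det(Q_{ij})=0\}$, which is a real-analytic subset of $V$. We claim that $D$ has empty interior. If it had interior, then on a dense open subset of that interior the rank of $(Q_{ij})$ would be constant. There $x\mapsto N_x$ would be a real-analytic distribution of positive rank, so it admits a non-vanishing analytic local section $X$. Any integral curve of $X$ is then a non-constant $C^1$-path of $Q$-length $0$, which contradicts the hypothesis. Applying the same argument to the restriction $Q|_{TS}$ on each stratum $S$ of an analytic (Whitney) stratification of $D$ shows that the degeneracy locus of $Q|_{TS}$ inside $S$ is again a proper analytic subset of $S$. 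Indeed, the hypothesis restricts to paths lying in $S$.

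\emph{Step 2 (quantitative lower bound).} On $V\setminus D$ the form $Q$ is non-degenerate. By the \L ojasiewicz inequality applied to the smallest eigenvalue of $(Q_{ij}(x))$ (equivalently, to $\det(Q_{ij})$), there are $c>0$ and $N\ge1$ with
\[
Q(x,\xi)\ \ge\ c\,\operatorname{dist}(x,D)^{2N}\,|\xi|^2,\qquad x\in V .
\]
Consequently, any path that travels Euclidean distance $r$ inside a region where $\operatorname{dist}(\cdot,D)\ge \rho$ has $Q$-length at least $c^{1/2}\rho^N r$.

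\emph{Step 3 (case analysis and induction).} Let $\gamma_n$ be paths from $p$ to a fixed $q\neq p$ with $L_Q(\gamma_n)\to0$.
\begin{itemize}
\item If $q\notin D$, a small ball around $q$ avoids $D$. Each $\gamma_n$ must cross the annulus around $q$, so Step 2 bounds $L_Q(\gamma_n)$ from below, a contradiction.
\item If $q\in D$ (and similarly for $p$), the paths may stay close to $D$ where Step 2 degenerates. Here we use the induction: we transport the paths to the strata of $D$ and apply the inductive hypothesis to $(S,Q|_{TS})$, which satisfies the assumptions of the lemma by Step 1, with $\dim S<\dim M$.
\end{itemize}
To make the transport precise, we use analytic tubular coordinates $(s,y)$ near a stratum $S$, with $S=\{y=0\}$. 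We write $Q(x,\xi)\ge Q|_{TS}(\pi x, d\pi\,\xi)-C|y|^{a}|\xi|^2$ for some $a>0$, again by \L ojasiewicz/Taylor expansion. We then show that either the normal coordinate $y$ of $\gamma_n$ must vary by a definite amount, costing $Q$-length by Step 2 applied transversally, or the projection $\pi\circ\gamma_n$ is a path in $S$ of small $Q|_{TS}$-length joining distinct points, which is excluded by the induction.

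\emph{Main obstacle.} We expect the delicate point to be the last step: controlling paths that shadow $D$ very closely for a long Euclidean time. For such paths neither the transversal estimate nor the tangential comparison is a priori uniform, since both error terms involve powers of $\operatorname{dist}(\cdot,D)$, and one must balance them. We would first try to split each $\gamma_n$ into the portions where $\operatorname{dist}(\gamma_n,D)\ge \rho_n$ and where it is $<\rho_n$, for a well-chosen $\rho_n\to0$. On the first portion we use Step 2. On the second portion we use the comparison with $Q|_{TS}$, whose error is $O(\rho_n^{a})$ times Euclidean length. If needed, we would first reparametrize by $Q$-arclength and use the Whitney conditions on the stratification to control how the paths switch between strata, following the scheme of \cite[Section 5.4]{BH24}.
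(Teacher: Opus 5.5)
\textbf{There is a genuine gap in the case $p,q\in D$, and it cannot be closed, because the lemma as stated is false.} Your Steps 1 and 2, and the case $q\notin D$ of Step 3, are sound. They follow the same plan as the paper: the determinant of $Q$ cannot vanish on an open set (constant-rank argument plus integrating a kernel field), $Q$ is positive definite off the degeneracy locus so such points are separated, and then one stratifies and inducts. The case $p,q\in D$ is the one you leave open. Your dichotomy there, ``either the normal coordinate of $\gamma_n$ moves by a definite amount, or $\pi\circ\gamma_n$ has small $Q|_{TS}$-length'', is false.

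\emph{Counterexample.} On $M=\mathbb R^3$ take
\[
Q=(dz-x\,dy)^2+(x^2+y^2)^2\,(dx^2+dy^2),\qquad \det Q=(x^2+y^2)^4 .
\]
The degeneracy locus is the $z$-axis, where $\ker Q=\mathrm{span}(\partial_x,\partial_y)$ is transverse to it. The hypothesis holds: along a zero-length piecewise $C^1$-path, $\dot x=\dot y=0$ wherever $(x,y)\neq 0$. So $x^2+y^2$ is constant, hence $(x,y)$ is constant and $\dot z=x\dot y=0$. Thus every non-constant path has positive length.

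Now run $N$ times around the circle of radius $\epsilon$ through the origin in the $(x,y)$-plane, and lift horizontally by $\dot z=x\dot y$. Each turn raises $z$ by $\pi\epsilon^2$ at $Q$-cost $4\pi\epsilon^3$. With $N=h/(\pi\epsilon^2)$ (choose $\epsilon$ so that $N$ is an integer), you join $(0,0,0)$ to $(0,0,h)$ with length $4h\epsilon\to 0$.

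This is exactly the shadowing scenario you flagged:
\begin{itemize}
\item the normal coordinate moves by only $O(\epsilon)$;
\item $\pi\circ\gamma_n$ has $Q|_{TS}$-length $h$, not small;
\item the Euclidean length is of order $h/\epsilon$, so your comparison error (a power of $\rho_n$ times Euclidean length) stays of order $h$.
\end{itemize}
The gain comes from non-integrability of the near-kernel distribution (sub-Riemannian winding). No choice of $\rho_n$ and no Whitney condition excludes it.

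The same construction gives a Hermitian counterexample on $\mathbb C^2\ni(u,w)$, namely $|\xi_2-\bar u\,\xi_1|^2+|u|^4|\xi_1|^2$. Winding $u$ around a circle of radius $\epsilon$ through $0$ moves $w$ by $\oint\bar u\,du=2\pi i\epsilon^2$ per turn, at cost $4\pi\epsilon^3$. So the Hermitian structure of $\langle\cdot,\cdot\rangle_{\mathrm{cmc}}$ does not help by itself.

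You will not find the missing idea in the paper either. After showing that zero-distance classes lie in $Z_1$, its proof restricts $Q$ to strata $Y\subset Z_1$ and invokes positivity of the intrinsic $Q|_{TY}$-distance. That only accounts for paths contained in $Y$. In the example, $Q|_{TY}=dz^2$ is positive definite on the $z$-axis, while the ambient distance vanishes there.

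A correct route to Theorem~\ref{thm:path-metric} has to use the specific structure of $Q$ rather than a finer stratification. One possible input would be a local lower bound of $Q$ by the pullback of a \emph{fixed} seminorm under the holomorphic map $\lambda\mapsto\zeta_\lambda$. That would give $d_{\mathrm{cmc}}(\lambda_0,\lambda_1)\ge c\,\|\zeta_{\lambda_1}-\zeta_{\lambda_0}\|$ directly by the triangle inequality, which can then be combined with finiteness of the multiplier fibers.
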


The proof of Lemma \ref{lem:semi-reduction}
is based on the
following theorem by Mityagin \cite{Mit15}.

\begin{thm}[Mityagin]\label{t:mityagin}
Let $V\subset\mathbb R^N$ be open and let
$F\colon V\to\mathbb R$
be a real-analytic function which is not identically zero. Then the zero set
$\{F=0\}$
is covered by a countable union of 
(not necessarily closed) real-analytic submanifolds of $V$.
\end{thm}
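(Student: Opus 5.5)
We prove Theorem~\ref{t:mityagin} by stratifying the zero set according to the order of vanishing of $F$ and covering each stratum locally by a smooth level set of a suitable partial derivative of $F$. Throughout we assume $V$ is connected; in the general case one applies the argument on each connected component on which $F\not\equiv 0$. (On a component where $F\equiv 0$, that whole open component is itself a real-analytic submanifold, and $V$ has only countably many components.)

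The first step is to show that every point has finite vanishing order. For $p\in V$, let $\operatorname{ord}_p F$ be the smallest $k\ge 0$ such that some partial derivative $\partial^\beta F(p)$ with $|\beta|=k$ is nonzero. Suppose $\operatorname{ord}_pF=\infty$ at some $p$. Then the Taylor series of $F$ at $p$ vanishes, so $F\equiv 0$ near $p$. The set of such points is therefore open. It is also closed, since it is the intersection of the closed sets $\{\partial^\beta F=0\}$. Connectedness would then force $F\equiv 0$, a contradiction. Hence
$$
\{F=0\}=\bigcup_{k\ge 1} Z_k,\qquad Z_k:=\{p\in V:\ \operatorname{ord}_pF=k\}.
$$

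The second step is the local covering of each stratum. Fix $k\ge1$ and $p\in Z_k$, and pick $\beta$ with $|\beta|=k$ and $\partial^\beta F(p)\neq 0$. Write $\beta=\alpha+e_j$ with $|\alpha|=k-1$, and set $G:=\partial^\alpha F$. Then $G$ is real-analytic, $G(p)=0$ (since $k-1<\operatorname{ord}_pF$), and $\partial_j G(p)\neq 0$. By the real-analytic implicit function theorem, there is an open neighbourhood $W_p$ of $p$ on which $\nabla G\neq 0$. On $W_p$, the set $M_p:=\{G=0\}\cap W_p$ is a real-analytic hypersurface, and it is a submanifold of $V$ (not necessarily closed). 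Every $q\in Z_k\cap W_p$ has all derivatives of order $k-1$ vanishing, in particular $G(q)=0$, so $Z_k\cap W_p\subset M_p$.

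The third step is countability. Since $\mathbb R^N$ is second countable, $Z_k$ is Lindelöf, so countably many of the sets $W_p$, say $W_{p_i}$ with $p_i\in Z_k$, already cover $Z_k$. Then $Z_k\subset\bigcup_i M_{p_i}$. Taking the union over $k\ge1$ gives countably many real-analytic submanifolds covering $\{F=0\}$.

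The only delicate points are the identity-theorem step, which is exactly where connectedness (or the componentwise treatment) and the hypothesis $F\not\equiv 0$ are used, and the choice of $G$ as a derivative of order exactly one less than the local vanishing order, so that $G$ vanishes on the whole stratum $Z_k$ near $p$ while $\nabla G(p)\neq 0$.
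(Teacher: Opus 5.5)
Your proof is correct. The paper gives no argument for this statement and only cites Mityagin, so your proposal is a self-contained route. You stratify $\{F=0\}$ by vanishing order, which is finite everywhere by the identity theorem on a connected set. Near a point of order $k$, you cover the stratum by the regular level set $\{\partial^\alpha F=0\}$ of a derivative of order $k-1$ with nonvanishing gradient.

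It is worth noting what this buys. As printed, the statement is vacuous, since the open set $V$ is itself a real-analytic submanifold of $V$ covering $\{F=0\}$. In the proof of Lemma~\ref{lem:semi-reduction}, however, the paper uses it in the form ``covered by countably many submanifolds of positive codimension''. Your argument gives exactly that, with hypersurfaces, on every connected component where $F\not\equiv 0$. The codimension-zero pieces you allow on components where $F\equiv 0$ are needed only for the literal statement. They never arise in the paper's application, where $V$ is a connected coordinate ball and $\Delta$ is not identically zero on any open set.

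Two minor remarks. First, the nonvanishing of $\nabla G$ near $p$ is just continuity; the implicit function theorem is what makes $M_p$ a real-analytic hypersurface. Second, the Lindel\"of step can be dropped. The countably many sets $\{\partial^\alpha F=0\}\cap\{\partial_j\partial^\alpha F\neq 0\}$, indexed by a multi-index $\alpha$ and $1\le j\le N$, are each regular level sets inside an open subset of $V$. By your second step they already cover $\{F=0\}$.
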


\begin{proof}[Proof of Lemma \ref{lem:semi-reduction}]
Since the statement is local, we work in a coordinate ball
$V\subset\mathbb R^N$.
Let $B(x)$ be the positive semi-definite symmetric matrix representing $Q$
in this chart, i.e.,
$Q(x,\xi)=\xi^T B(x)\xi$.
The entries of $B$ are real-analytic.

We first claim that
$\Delta(x):=\det B(x)$
is not identically zero on any open subset of $V$. Indeed, if
$\Delta\equiv0$ on some open set $W$, then $B(x)$ has a non-trivial kernel
for every $x\in W$. On a smaller open subset $W'\subset W$, the rank of
$B(x)$ is constant. Hence $\ker B(x)$ forms a non-trivial real-analytic
subbundle of $TW'$. Choosing a non-vanishing real-analytic vector field
$X(x)\in\ker B(x)$ and integrating it gives a non-constant analytic path
$\eta$ such that
$Q(\eta(t),\dot\eta(t))=0$
for all $t$. This path has zero length, contradicting the assumption.
Therefore $\Delta$ is not identically zero.

It follows that $Q$ is positive definite on
$V\setminus Z_1$ where $Z_1:=\{\Delta=0\}$.
By Theorem
\ref{t:mityagin}, $Z_1$ is covered by countably many real-analytic
submanifolds of positive codimension. Any zero-distance degeneracy class must
be contained in $Z_1$; otherwise it would meet a region where $Q$ is
positive definite, which 
would give positive
distance.

Now restrict $Q$ to each analytic submanifold $Y\subset Z_1$. The restricted
form is again real-analytic and positive semi-definite on $TY$. If its
determinant were identically zero on an open subset of $Y$, the same
constant-rank argument would produce a non-constant path inside $Y$ of zero
length, again contradicting the assumption. Therefore the determinant of the
restricted form is not identically zero. Applying 
Theorem  \ref{t:mityagin} again, 
its
zero locus inside $Y$ is covered by countably many analytic submanifolds of
strictly smaller dimension.

By induction, we can reduce 
the problem to the case of dimension $1$, i.e., an analytic path.
In this case,
a non-zero real-analytic
non-negative quadratic form has only isolated zeros unless it vanishes
identically; the latter is impossible by the positive-length assumption.
Therefore two distinct points in the same one-dimensional analytic stratum have
positive path distance. This
proves local separation and completes the proof.
\end{proof}

We can now conclude the 
proof of Theorem \ref{thm:path-metric}.
Define the path pseudo-distance
$$
d_{\rm cmc}(\lambda_1,\lambda_2):=
\inf_\gamma L_{\rm cmc}(\gamma),
$$
where the infimum is taken over all piecewise $C^1$-paths
$\gamma$ in $\Omega$ joining $\lambda_1$ to $\lambda_2$.

\begin{proof}[Proof of Theorem \ref{thm:path-metric}]
The function $d_{\rm cmc}$ is 
symmetric, non-negative, and
satisfies the triangle inequality, since it is defined as an infimum of path
lengths. Hence it is a pseudo-distance. We only need to prove that it separates
points.

By Proposition \ref{prop:analyticity-tangent-bundle}, the quadratic form $Q$ is real-analytic on the tangent
bundle. By Proposition \ref{prop:positive-length}, every non-constant piecewise $C^1$-path has
strictly positive length. Therefore Lemma \ref{lem:semi-reduction}
applies locally and shows 
the local separation of points.

Since $d_{\rm cmc}$ is induced by path lengths, local separation implies global separation on the connected component $\Omega$. Indeed, a path joining a point to a point outside a sufficiently small neighbourhood must cross the boundary of that neighbourhood, and the local separation gives a positive lower bound for the length needed to do so. Hence, we have $d_{\rm cmc}(\lambda_1,\lambda_2)>0$ whenever $\lambda_1\neq\lambda_2$.

Thus $d_{\rm cmc}$ is a distance on $\Omega$, and the proof is complete.
\end{proof}

\printbibliography
\end{document}